\documentclass[a4paper,12pt]{article}
\pagestyle{headings}

%\documentclass[12pt,a4paper]{article}
%\addtolength{\textheight}{60pt} \addtolength{\topmargin}{-30pt}
%\textwidth 164mm \oddsidemargin -2.25mm

%\title{Comparison results of $P_2$-finite elements for the von K\'{a}rm\'{a}n equations}
\title{Comparison results of $P_2$-finite elements for fourth-order semilinear von K\'{a}rm\'{a}n equations}
%\author{Gouranga Mallik\footnote{Department of Mathematics, Indian Institute of Technology Bombay, Powai, Mumbai 400076, India. Email. gouranga@math.iitb.ac.in}
%	\; and  Neela Nataraj\footnote{Department of Mathematics, Indian Institute of Technology Bombay, Powai, Mumbai 400076, India. Email. neela@math.iitb.ac.in}}
\author {Gouranga Mallik\footnote{Department of Mathematics, Indian Institute of Science, Bangalore - 560012, India. Email. gourangam@iisc.ac.in}
}
%\date{\vspace{-5ex}}
\usepackage{amsmath,amsthm,amssymb,enumerate}
%For degree symbol
\usepackage{gensymb}
\usepackage[square,sort&compress,comma,numbers]{natbib}

\usepackage[sc]{mathpazo}
\usepackage{multirow} 

\usepackage{multicol}
\usepackage{newtxtext,newtxmath}
\usepackage{subfig}
\usepackage{graphicx}
\usepackage{epstopdf}
\usepackage{hyperref}
\usepackage{cancel}
\usepackage[margin=3cm]{geometry}
\usepackage{tikz}
\usepackage{caption}
\usetikzlibrary{shapes,calc}
\usepackage{verbatim}
\usepackage{mathrsfs}
\usepackage{algorithm}
\usepackage{accents}

%    Some definitions useful in producing this sort of documentation:
\chardef\bslash=`\\ % p. 424, TeXbook
%    Normalized (nonbold, nonitalic) tt font, to avoid font
%    substitution warning messages if tt is used inside section
%    headings and other places where odd font combinations might
%    result.

%    command name

%    LaTeX package name

%    File name

%    environment name

\hfuzz1pc % Don't bother to report overfull boxes if overage is < 1pc

%       Theorem environments

%% \theoremstyle{plain} %% This is the default
\newtheorem{thm}{Theorem}[section]
\newtheorem{cor}[thm]{Corollary}
\newtheorem{lem}[thm]{Lemma}

\theoremstyle{definition}

\theoremstyle{remark}
\newtheorem{rem}{Remark}[section]

\numberwithin{equation}{section}

%%%% Definitions %%%%%%

\newcommand{\bR}{\mathbb R}

\newcommand{\cE}{\mathcal E}

\newcommand{\cT}{\mathcal T}

\newcommand{\cN}{\mathcal N}

\newcommand{\bX}{\boldsymbol{X}}

\newcommand{\bv}{\boldsymbol{v}}

\newcommand{\map}{\longrightarrow}

\newcommand{\lt}{L^2(\Omega)}

\newcommand{\hto}{H^2_0(\Omega)}

\newcommand{\St}{P_2(\cT)}

\newcommand{\vket}{von K\'{a}rm\'{a}n equations }

\newcommand{\integ}{\int_\Omega}

\newcommand{\sik}{\sum_{K\in\mathcal{T}}\int_K}

\newcommand{\se}{\sum_{E\in \mathcal{E}}\int_E}
\newcommand{\sie}{\sum_{E\in \mathcal{E}(\Omega)}\int_E}

\newcommand{\fl}{\quad\forall}

\newcommand{\half}{\frac{1}{2}}
\newcommand{\trinl}{\ensuremath{|\!|\!|}}
\newcommand{\trinr}{\ensuremath{|\!|\!|}}

\newcommand{\htk}{H^2(K)}

\newcommand{\dx}{{\rm\,dx}}

\newcommand{\ds}{{\rm\,ds}}

\newcommand{\MS}{{\rm M}(\cT)}

\newcommand*{\avintK}{\mathop{\ooalign{$\int_K$\cr$-$}}}
\newcommand*{\avintE}{\mathop{\ooalign{$\int_E$\cr$-$}}}

\newcommand{\bMS}{{\boldsymbol{\rm M}}(\cT)}
\newcommand{\bIPS}{{\boldsymbol{\rm IP}}(\cT)}
\newcommand{\IPS}{{\rm IP}(\cT)}
\newcommand{\DGS}{{P}_2(\cT)}
\newcommand{\bDGS}{\boldsymbol{\rm P}_2(\cT)}

\newcommand{\M}{{\rm M}}
\newcommand{\nc}{{\rm NC}}
\newcommand{\dg}{{\rm dG}}
\newcommand{\ip}{{\rm IP}}
%\numberwithin{equation}{section}

\newcommand{\Holder}{H\"{o}lder }

\newcommand{\jump}[1]{\left[#1\right]_E}
\newcommand{\avg}[1]{\left\langle#1\right\rangle_E}
\begin{document}
\date{\today}
\maketitle

\begin{abstract} 
Lower-order $P_2$ finite elements are popular for solving fourth-order elliptic PDEs when the solution has limited regularity.  
%Some nonstandard quadratic finite element methods for solving semilinear fourth-order von K\'{a}rm\'{a}n equations are become attractive. 
{\it A priori} and {\it a posteriori} error estimates for von K\'{a}rm\'{a}n equations are considered in Carstensen {\it et al.} \cite{CCGMNN_DG,CCGMNN_Semilinear} with respect to different mesh dependent norms which involve different jump and penalization terms. This paper addresses the question, whether they are comparable with respect to a common norm. This article  establishes that the errors for the quadratic symmetric interior discontinuous Galerkin, $C^0$ interior penalty and nonconforming Morley finite element methods are equivalent upto some higher-order oscillation term with respect to a unified norm. Numerical experiments are performed to substantiate the comparison results.
\end{abstract}

{\bf Key words:} Morley element, interior penalty method, discontinuous Galerkin method, von K\'{a}rm\'{a}n equations, medius error analysis.

\section{Introduction}
This paper concerns the comparison results of $P_2$-finite element approximations of regular solution to the von K\'{a}rm\'{a}n equations defined on $\Omega\subset\bR^2$, which describe the deflection of very thin elastic plates. Those plates are  modeled by a semi-linear system of fourth-order coupled partial differential equations (PDEs) and can be described as follows. For a given load function $f\in\lt$, seek vertical displacement $u$ and Airy's stress $v$ %\in H^2_0(\Omega)$ 
such that
\begin{subequations}\label{vke}
\begin{align}
&\Delta^2 u =[u,v]+f &&
\text{ in } \Omega,\\
&\Delta^2 v =-\half[u,u] &&
\text{ in } \Omega,\\
&u=\frac{\partial u}{\partial \nu} = v = \frac{\partial v}{\partial \nu} = 0 &&\text{  on  } \partial\Omega,
%\end{equation}
%with clamped boundary conditions
%\begin{equation}\label{vkbdG}
%u=\frac{\partial u}{\partial \nu} = v = \frac{\partial v}{\partial \nu} = 0 \text{  on  } \partial\Omega,
\end{align}	
\end{subequations}
\noindent with the biharmonic operator $\Delta^2$ and the von K\'{a}rm\'{a}n bracket $[\bullet,\bullet]$ are defined as
$\displaystyle\Delta^2\varphi:=\varphi_{xxxx}+2\varphi_{xxyy}+\varphi_{yyyy}$, and $\displaystyle
[\eta,\chi]:=\eta_{xx}\chi_{yy}+\eta_{yy}\chi_{xx}-2\eta_{xy}\chi_{xy}$.
%=\cof(D^2\eta):D^2\chi$ for the co-factor matrix $\cof(D^2\eta)$ of $D^2\eta$. The colon $:$ denotes the scalar product of two $2\times 2$ matrices.

The quasi-optimality results of Gudi \cite{Gudi10} on medius analysis for linear biharmonic problem imply that the errors of these methods are comparable with best-approximation in the finite element space. The comparisons are made with respect to different discrete norms which depend on the underlying finite element spaces. Carstensen et al. \cite{CC_DG_NN_15_Comparison} extends this results to $P_2$ finite elements with an equivalent unified norm.

Though there are many research on medius analysis for linear PDEs \cite{Gudi10,CC_DG_NN_15_Comparison}, but there are very few results \cite{CCGMNN_Semilinear} for nonlinear PDEs. This paper establishes a comparison result of $P_2$ finite elements for semilinear \vket with respect  to a unified norm $\trinl\bullet\trinr_h$ as:
\begin{equation*}
\trinl \Psi-\Psi_{\M}\trinr_h\approx\trinl \Psi-\Psi_{\ip}\trinr_h\approx\trinl \Psi-\Psi_{\dg}\trinr_h
\end{equation*}
upto some oscillation, where $\Psi_{\M},\Psi_{\ip}$ and $\Psi_{\dg}$ are the approximate solutions to \eqref{vke} for nonconforming, $C^0$ interior penalty and discontinuous Galerkin finite element methods respectively.
The optimal convergence rates are achieved in numerical experiments, when meshes are adapted by a posteriori estimators.

\medskip
Throughout the paper, standard notation on Lebesgue and Sobolev spaces and their norms are employed.
% We denote the standard $L^2$ scalar or vector inner product by $(\bullet,\bullet)$ and 
The standard semi-norm and norm on $H^{s}(\Omega)$ (resp. $W^{s,p} (\Omega)$) for $s>0$ are denoted by $|\bullet|_{s}$ and $\|\bullet\|_{s}$ (resp. $|\bullet|_{s,p}$ and $\|\bullet\|_{s,p}$ ). The duality pairing between $X$ and its dual space $X^*$ is denoted by $(\bullet,\bullet)$. Bold letters, e.g. $\bX= X\times X$ refer to product spaces and Greek letters refer to vector valued functions. The positive constants $C$ appearing in the inequalities denote generic constants which do not depend on the mesh-size. The notation $a\lesssim b$ means that there exists a generic constant $C$ independent of the mesh parameters and independent of the stabilization parameters $\sigma_1$ and $\sigma_2\geq 1$ such that $a \leq Cb$; $a\approx b$ abbreviates $a\lesssim b\lesssim a$.

\section{Preliminaries}
This section introduces weak formulation for the von K\'{a}rm\'{a}n equations and states some known results.
The weak formulation of von K\'{a}rm\'{a}n equations \eqref{vke} reads: Given $f\in\lt$, seek  $u,v\in \: X:=\hto$ such that
\begin{subequations}\label{wform}
	\begin{align}
	& a(u,\varphi_1)+ b(u,v,\varphi_1)+b(v,u,\varphi_1)=l(\varphi_1)   \fl\varphi_1\in X\label{wforma}\\
	& a(v,\varphi_2)-b(u,u,\varphi_2)   =0            \fl\varphi_2 \in X,\label{wformb}
	\end{align}
\end{subequations}
where, for all $\eta,\chi,\varphi\in X$, 
%\begin{align}
% &a(\eta,\chi):=\integ D^2 \eta:D^2\chi\dx,\quad b(\eta,\chi,\varphi):=\half\integ {\rm cof}(D^2\eta)D\chi\bullet D\varphi\dx\label{defnab}\\               
%& l(\varphi):=(f,\varphi).\label{defnl}
%\end{align}
\begin{align}
&a(\eta,\chi):=\integ D^2 \eta:D^2\chi\dx,\; \; b(\eta,\chi,\varphi):=-\half\integ [\eta,\chi]\varphi\dx, \text{ and } l(\varphi):=\int_{\Omega}f\varphi\dx. \label{defnab}
\end{align}
Given $F=(f,0)\in L^2(\Omega)\times L^2(\Omega)$, the combined vector form seeks $\Psi=(u,v)\in \bX:=X\times X\equiv\hto\times\hto$ such that
\begin{equation}\label{vform_cts}
N(\Psi,\Phi)=(N(\Psi),\Phi):=A(\Psi,\Phi)+B(\Psi,\Psi,\Phi)-L(\Phi)=0\fl \Phi\in \bX,
\end{equation}
where, for all $\Xi=(\xi_1,\xi_2),\Theta=(\theta_1,\theta_2)$, and $\Phi=(\varphi_1,\varphi_2)\in  \bX$,
\begin{align*}
& A(\Theta,\Phi):=a(\theta_1,\varphi_1)+a(\theta_2,\varphi_2),\\
&B(\Xi,\Theta,\Phi):=b(\xi_1,\theta_2,\varphi_1)+b(\xi_2,\theta_1,\varphi_1)-b(\xi_1,\theta_1,\varphi_2)\text{ and } L(\Phi):=l(\varphi_1).
\end{align*}
Since $b(\bullet,\bullet,\bullet)$ is symmetric in first two variables, the trilinear form $B(\bullet,\bullet,\bullet)$ is symmetric in first two variables.

Let $\trinl\bullet\trinr_2$ denote the product norm on $\bX$  defined by $\trinl\Phi\trinr_2:=\left(|\varphi_1|_{2}^2+|\varphi_2|_{2}^2\right)^{1/2}$ for all $\Phi=(\varphi_1,\varphi_2)\in \bX$. It is easy to verify the boundedness and ellipticity properties
\begin{align*}
&{A}(\Theta,\Phi)\leq \trinl\Theta\trinr_2 \: \trinl\Phi\trinr_2,\: {A}(\Theta,\Theta) \geq \trinl\Theta\trinr_2^2,\\
&\quad B(\Xi, \Theta, \Phi) \leq  C \trinl\Xi\trinr_2 \: \trinl\Theta\trinr_2 \: \trinl\Phi\trinr_2.
\end{align*}

\medskip
For the existence of solution to \eqref{vform_cts}, regularity and bifurcation phenomena, we refer  to \cite{CiarletPlates, Knightly, BergerFife66, Berger,BergerFife, BlumRannacher}. For given $f\in H^{-1}(\Omega)$, it is well known \cite{BlumRannacher} that on a polygonal domain $\Omega$, the solutions $u,v$ belong to $\hto\cap H^{2+\alpha}(\Omega)$, where the index of elliptic regularity $\alpha\in (\half,1]$ determined by the interior angles of $\Omega$. Note that when $\Omega$ is convex; $\alpha=1$; that is, the solution belongs to $\hto\cap H^3(\Omega) $. Unless specified otherwise, the parameter $\alpha$ is supposed to satisfy $1/2<\alpha\leq 1$. 

\medskip

Denote the  Gateaux derivative of $N(\Psi)$ at $\Psi$ in the direction $\Theta$ by $DN(\Psi;\Theta)$. Due to symmetry of $B(\bullet,\bullet,\bullet)$, we have $DN(\Psi;\Theta,\Phi)=(DN(\Psi;\Theta),\Phi)=A(\Theta,\Phi)+2B(\Psi,\Theta,\Phi)$.
Throughout the paper, we consider the approximation of a regular solution \cite{Brezzi,GMNN_BFS} $\Psi$ to the non-linear map $N(\Psi)=0$ of \eqref{vform_cts} in the sense that the bounded derivative $DN(\Psi;\Theta,\Phi)$ satisfies the inf-sup condition 
\begin{align}\label{inf_sup_cts}
0<\beta:=\inf_{\substack{\Theta\in \bX\\ \trinl\Theta\trinr_2=1}}\sup_{\substack{\Phi\in \bX\\ \trinl\Phi\trinr_2=1}}DN(\Psi;\Theta,\Phi).
\end{align}
%for some positive constant $\beta$.

\section{Finite element methods and their comparison}
Let  $\cT$ be a shape-regular \cite{Braess} triangulation of the bounded polygonal Lipschitz domain $\Omega\subset\bR^2$ into closed triangles. 
%Let $\cE$ denote the set of all edges for the triangulation $\cT$. Also, let $\cE(\Omega)$ (resp. $\cE(\partial\Omega)$) denote the set of all interior edges (resp. boundary edges). 
The set of all internal vertices (resp. boundary vertices) and  interior edges (resp.  boundary edges)  of the triangulation $\cT$ are denoted by $\cN (\Omega)$ (resp.  $\cN(\partial\Omega)$) and $\cE (\Omega)$ (resp. $\cE (\partial\Omega)$).
Define a piecewise constant mesh function $h_{\cT}(x)=h_K={\rm diam} (K)$ for all $x \in K$, $ K\in \cT$, and set $h:=\max_{K\in \cT}h_K$. Also define a piecewise constant edge-function on $\cE:=\cE(\Omega)\cup \cE(\partial\Omega)$ by $h_{\cE}|_E=h_E={\rm diam}(E)$ for any $E\in \cE$. Set of all edges of $K$ is denoted by $\cE(K)$. Note that for a shape-regular family, there exists a positive constant $C$ independent of $h$ such that any $K\in\cT$ and any $E\in \partial K$ satisfy 
\begin{equation}\label{shape_reg_const}
Ch_K\leq h_E\leq h_K.
\end{equation}
\noindent Let $P_r( K)$ denote the set of all polynomials of degree less than or equal to $r$ and $\displaystyle P_r(\cT):=\left\{\varphi\in L^2(\Omega):\,\forall K\in\cT,\varphi|_{K}\in P_r(K)\right\}$ and write $\boldsymbol{\rm P}_r(\cT):=P_r(\cT)\times P_r(\cT)$ for pairs of piecewise polynomials.
%Introduce the finite element space
%\begin{equation*}
%\mathcal{P}_2(\cT)=\left\{\varphi\in\lt: \varphi|_K\in P_2( K)\fl K\in \cT\right\}.
%\end{equation*}
For a nonnegative integer $s$, define the broken Sobolev space for the subdivision $\cT$  as
\begin{equation*}
H^s(\cT)=\left\{\varphi\in\lt: \varphi|_K\in H^{s}(K)\fl K\in \cT \right\}
\end{equation*}
with the broken Sobolev semi-norm  $|\bullet|_{H^s(\cT)}$ and norm $\| \bullet\|_{H^s(\cT)}$ defined by
\begin{equation*}
|\varphi|_{H^s(\cT)}=\bigg{(}\sum_{K\in\cT} |\varphi|_{H^{s}(K)}^2\bigg{)}^{1/2}\text{ and }
\|\varphi\|_{H^s(\cT)}=\bigg{(}\sum_{K\in\cT}\|\varphi\|_{H^{s}(K)}^2\bigg{)}^{1/2}.
\end{equation*}
Define the jump $[\varphi]_E=\varphi|_{K_+}-\varphi|_{K_-}$ and the average $\langle\varphi\rangle_E=\half\left(\varphi|_{K_+}+\varphi|_{K_-}\right)$ across the interior edge $E$ of $\varphi\in H^1(\cT)$ of the adjacent triangles  $K_+$ and $K_-$. Extend the definition of the jump and the average to an edge lying in boundary by $[\varphi]_E=\varphi|_E$ and $\langle\varphi\rangle_E=\varphi|_E$ { when $E$ belongs to the set of boundary edges $\cE(\partial\Omega)$}.
% owing to the homogeneous boundary conditions.  
For any vector function, jump and average are understood componentwise.
% are defined by %$[\boldsymbol{\xi}]_E=([\xi_1]_E,[\xi_2]_E) $ and $\{\boldsymbol{\xi}\rangle_E  =(\{\xi_1\rangle_E  ,\{\xi_2\rangle_E  )$, respectively.
%Set $\Gamma=\{x\in \Omega: x\in E\text{ for some } E\in \cE\}$. 
The union of all edges reads $\Gamma\equiv\bigcup_{E\in\cE}E$. 
%For $u, \: v\in L^2(\Gamma)$, define an inner product by
%$\displaystyle
%\langle u,v\rangle_{L^2(\Gamma)}:=\int_{\Gamma} uv\ds
%$, and denote the associated norm by $\|\bullet\|_{L^2(\Gamma)}$.
Define a general nonconforming norm $\displaystyle\|v_{h}\|_{\nc}^2:=\sik |D^2v_h|^2\dx$ for $v_h\in H^2(\cT)+P_2(\cT)$. 

\subsection{Morley finite element}
The nonconforming Morley element space $\MS$ associated with the triangulation $\cT$ is defined by
\begin{equation*}
\MS:=\Bigg{\{} v_M\in P_2(\cT){{\Bigg |}}
\begin{aligned}
& v_M \text{ is continuous at } \cN(\Omega) \text{ and vanishes at }  \cN(\partial \Omega), \:   {\text{for all}} \:   \\
&E \in \cE (\Omega) \:\int_{E}\left[\frac{\partial v_M}{\partial \nu}\right]_E\ds=0;\:  {\text{for all}} \: E\in \cE (\partial\Omega) \: \int_{E}\frac{\partial v_M}{\partial \nu}\ds=0
\end{aligned}
\Bigg{\}}.
\end{equation*}
Define the discrete bilinear, trilinear and linear forms by
\begin{align*}
a_{\nc}(\eta,\chi)&:=\sik D^2 \eta:D^2\chi\dx\\ b_{h}(\eta,\chi,\varphi)&:=-\half\sik [\eta,\chi]\varphi\dx, \text{ and } l_{h}(\varphi):=\sik f\varphi\dx.
\end{align*}
A nonconforming finite element formulation corresponding to \eqref{vform_cts} seeks $\Psi_{\M}\in \bMS:=\MS\times \MS$ such that
\begin{equation}\label{vformd_nc}
N_{\nc}(\Psi_{\M};\Phi_{\M}):=A_{\nc}(\Psi_{\M},\Phi_{\M})+B_{h}(\Psi_{\M},\Psi_{\M},\Phi_{\M})-L_{h}(\Phi_{\M})=0 \fl \Phi_{\M} \in  \bMS,
\end{equation}
where the vector discrete bilinear, trilinear and linear forms read: for all $ \Xi=(\xi_{1},\xi_{2}),\Theta=(\theta_{1},\theta_{2})$ and $ \Phi=(\varphi_{1},\varphi_{2})\in \bMS$,
\begin{align}
&A_{\nc}(\Theta,\Phi):=a_{\nc}(\theta_1,\varphi_1)+a_{\nc}(\theta_2,\varphi_2), \notag\\
&B_{h}(\Xi,\Theta,\Phi):=b_{h}(\xi_{1},\theta_{2},\varphi_{1})+b_{h}(\xi_{2},\theta_{1},\varphi_{1})-b_{h}(\xi_{1},\theta_{1},\varphi_{2}),\label{defn_Bh}\\
&L_{h}(\Phi):=l_{h}(\varphi_1).\label{defn_Lh}
\end{align} 
The existence, local uniqueness and error estimates for the discrete solution of \eqref{vformd_nc} are shown in \cite{CCGMNN_Semilinear} for sufficiently small mesh parameter $h$. In the next lemma, an interpolation result  is defined and its results are stated.

\begin{lem}[Morley interpolation] \label{Morley_Interpolation} \cite{HuShi_Morley_Apost,CCDGJH14}
	For any $v\in X+ \MS(\cT)$, the Morley interpolation 
	$I_M(v)\in \MS$  defined by
	\begin{equation*}
	(I_M v)(z)=v(z) \text{ for any } z\in \cN(\Omega) \text{ and } 
	\int_E\frac{\partial I_M v}{\partial \nu_E}\ds=\int_E\frac{\partial v}{\partial \nu_E}\ds \text{ for any } E\in \cE
	\end{equation*}
	satisfies the integral mean property of the Hessian \\
	(a) $D^2_{\text{\rm pw}} I_M =\Pi_0 D^2$ and,	
	%The function $v\in V+ \cM(\cT)$ and its interpolation $I_M v$ satisfy
	\begin{equation*}
	(b)\: \|h_K^{-2}(1-I_M)v \|_{L^2(K)}+\|h_K^{-1}\nabla(1-I_M) v\|_{L^2(K)}
	+{\|D^2 I_Mv\|_{L^2(K)}}\lesssim
	\|D^2v\|_{L^2(K)}.\qquad\qed
	\end{equation*}
\end{lem}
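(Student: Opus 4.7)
The plan is to decouple the two assertions. For (a) I would carry out an integration by parts on each triangle that exploits exactly the two families of Morley degrees of freedom; for (b) the $D^2$ bound is a direct consequence of (a), while the two weighted $L^2$ bounds reduce to a Bramble--Hilbert estimate on the reference triangle followed by an affine scaling argument.

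For (a), fix $K\in\cT$ and observe that $I_M v|_K\in P_2(K)$, so $D^2 I_M v|_K$ is a constant matrix whose entries are the means $|K|^{-1}\int_K \partial_i\partial_j I_M v\dx$. Integration by parts combined with the decomposition $\partial_i = \nu_i\partial_\nu + \tau_i\partial_\tau$ on each edge gives
\[
\int_K \partial_i\partial_j I_M v\dx
= \sum_{E\in\cE(K)}\int_E \nu_j\bigl(\nu_i\,\partial_\nu I_M v + \tau_i\,\partial_\tau I_M v\bigr)\ds.
\]
The normal contribution reduces to $\nu_i\nu_j\int_E\partial_\nu v\ds$ by the edge degree of freedom. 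For the tangential contribution, the fundamental theorem of calculus along an edge $E$ with endpoints $z_1,z_2$ produces $(I_M v)(z_2)-(I_M v)(z_1) = v(z_2)-v(z_1)$ by the vertex degree of freedom, and this equals $\int_E\partial_\tau v\ds$ (applied to the one-sided trace $v|_K$, which is polynomial and hence continuous along $E$). Reassembling and reversing the same integration by parts with $v$ in place of $I_M v$ yields $\int_K D^2 I_M v\dx = \int_K D^2 v\dx$, which is precisely $D^2_{\text{pw}} I_M v = \Pi_0 D^2 v$.

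Part (b) splits into three terms. The estimate on $\|D^2 I_M v\|_{L^2(K)}$ is immediate from (a) and the $L^2$-orthogonality of $\Pi_0$: $\|D^2 I_M v\|_{L^2(K)} = \|\Pi_0 D^2 v\|_{L^2(K)} \leq \|D^2 v\|_{L^2(K)}$. For the $L^2$ and $H^1$ weighted estimates I would pass to a reference triangle $\hat K$. Two ingredients give a Bramble--Hilbert bound there: $I_M$ maps $H^2(\hat K)$ continuously into $P_2(\hat K)$, since the vertex functionals are controlled through the embedding $H^2(\hat K)\hookrightarrow C^0(\hat K)$ and the edge averages of the normal derivative through the trace inequality; and $I_M$ leaves $P_1(\hat K)$ invariant, because any affine function matches all Morley degrees of freedom with itself. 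Hence $\|(1-I_M)\hat v\|_{H^2(\hat K)} \lesssim \inf_{p\in P_1}\|\hat v-p\|_{H^2(\hat K)} \lesssim |\hat v|_{H^2(\hat K)}$, and the standard affine pullback to $K$ produces the $h_K^{-2}$- and $h_K^{-1}$-scaled bounds.

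The step I anticipate to be most delicate is the tangential identity in (a). Since the lemma permits $v\in X+\MS$, the global function may be discontinuous across edges, and one must therefore apply the fundamental theorem of calculus to the one-sided trace $v|_K$ and then observe that its endpoint values reduce to the globally well-defined vertex values $v(z_j)$ inherited from the vertex continuity condition encoded in $\MS$. Once this bookkeeping is in place, the remainder of the argument is the routine polynomial-preservation-plus-scaling machinery.
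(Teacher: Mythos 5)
The paper does not prove this lemma at all: it is quoted as a known result with a reference to the cited literature, so there is no in-paper argument to compare against. Your reconstruction is essentially the standard proof from those references and is correct in substance. Part (a) is exactly right: elementwise integration by parts, the normal--tangential splitting of $\nabla$ on each edge, the edge functional for the normal part, and the fundamental theorem of calculus plus the vertex functional for the tangential part; your bookkeeping remark about one-sided traces versus global vertex values is the right thing to worry about, and note also that at boundary vertices the identity $(I_Mv)(z)=v(z)$ still holds because both $X$ and $\MS$ force the value zero there. The only place where part (b) is glibber than it should be is the phrase ``standard affine pullback'': the Morley element is not affine-equivalent, since the functionals $\int_E\partial v/\partial\nu_E\ds$ do not transform into the corresponding reference functionals under a general affine map, so the Bramble--Hilbert argument on $\hat K$ must be run with the pulled-back (element-dependent) degrees of freedom and a compactness/shape-regularity argument to get uniform constants --- the usual ``almost-affine'' fix. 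A cleaner route, and the one closer to the cited sources, is to deduce the two weighted bounds directly from part (a): the function $w=(1-I_M)v$ vanishes at the three vertices of $K$ and has vanishing edge means of $\partial w/\partial\nu_E$, so a scaled Poincar\'e--Friedrichs inequality gives $\|h_K^{-2}w\|_{L^2(K)}+\|h_K^{-1}\nabla w\|_{L^2(K)}\lesssim\|D^2w\|_{L^2(K)}\leq 2\|D^2v\|_{L^2(K)}$, bypassing the affine-equivalence issue entirely. Either way the lemma follows; your proof just needs that one sentence of repair.
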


%\begin{lem}[Enrichment] 
%	\label{hctenrich} \cite{BSZ2013,DG_Morley_Eigen}
%	There exists an enrichment operator $E_M:\MS\to V$ such that $\varphi_M\in \MS$ satisfies
%	\begin{align*}
%	&(a)\quad \sum_{m=0}^2 h_K^{2m}|\varphi_M-E_M\varphi_M|_{H^m(K)}^2 \lesssim \: h_{K}^4|\varphi_M|_{H^2(\cT(\omega_K))}^2\fl K\in\cT; \\
%	&(b)\quad \| h_{\cT}^{-2}(\varphi_M-E_M\varphi_M)\|_{\lt}^2\lesssim \sum_{E\in\cE} h_E \|[D^2\varphi_M]_E\tau_E\|_{L^2(E)}^2\\
%	&\hspace*{4cm}\lesssim \trinl \varphi_M-E_M\varphi_M\trinr_{\nc}^2\leq\Lambda \min_{\varphi\in V}\| D_{h}^2(\varphi_M-\varphi)\|_{\lt}^2; \\
%	&(c) \quad I_ME_M\varphi_M=\varphi_M,\quad\text{and}\quad
%	\varphi_M-E_M\varphi_M\perp P_0(\cT)\text{ in }L^2(\Omega).\qquad\qquad\qed
%	\end{align*}
%\end{lem}

\subsection{$C^0$ finite element}
The $C^0$ IP method is based on the continuous Lagrange $P_2$ finite element space
\begin{equation*}
\ip(\cT):=P_2(\cT)\cap H^1_0(\Omega).
\end{equation*}
Define the discrete bilinear, trilinear and linear forms by: for all $\eta_{\ip},\chi_{\ip}$ and $\varphi_{\ip}\in\ip(\cT)$
\begin{align}
&a_{\ip}(\eta_{\ip},\chi_{\ip}):=a_{\nc}(\eta_{\ip},\chi_{\ip})+\se \left\langle \frac{D^2\eta_{\ip}}{\partial\nu_E^2}\right\rangle_E \left[\frac{\partial\chi_{\ip}}{\partial\nu_E}\right]_E\ds\notag\\
&\qquad+\se \left\langle \frac{D^2\chi_{\ip}}{\partial\nu_E^2}\right\rangle_E \left[\frac{\partial\eta_{\ip}}{\partial\nu_E}\right]_E\ds+\sum_{E\in\cE}\frac{\sigma_{\ip}}{h_E}\int_E\left[\frac{\partial\eta_{\ip}}{\partial\nu_E}\right]_E\left[\frac{\partial\chi_{\ip}}{\partial\nu_E}\right]_E\ds.
%&b_{\ip}(\eta_\ip,\chi_\ip,\varphi_\ip):=b_{\nc}(\eta_\ip,\chi_\ip,\varphi_\ip)\text{ and } l_\ip(\varphi_\ip):=l_{\ip}(\varphi_\ip).
\end{align}
The $C^0$IP norm on $\ip(\cT)$ is defined by $$\|\eta_{\ip}\|_{\ip}^2:=\|\eta_{\ip}\|_{\nc}^2+\sum_{E\in\cE}h_E^{-1}\left\|\left[\frac{\partial\eta_{\ip}}{\partial\nu_E}\right]_E\right\|_{L^2(E)}^2.$$
For sufficiently large penalty parameter $\sigma_{\ip}$, the coercivity result \cite{BGS10} $\|\bullet\|_{\ip}^2\lesssim a_{\ip}(\bullet,\bullet)$ on $\ip(\cT)$ holds.
A $C^0$ IP finite element formulation corresponding to \eqref{vform_cts} seeks $\Psi_{\ip}\in \bIPS:=\IPS\times \IPS$ such that
\begin{equation}\label{vformd_ip}
N_\ip(\Psi_{\ip};\Phi_{\ip}):=A_{\ip}(\Psi_{\ip},\Phi_{\ip})+B_{h}(\Psi_{\ip},\Psi_{\ip},\Phi_{\ip})-L_{h}(\Phi_{\ip})=0 \fl \Phi_{\ip} \in  \bIPS,
\end{equation}
where for all $ \Xi=(\xi_{1},\xi_{2}),\Theta=(\theta_{1},\theta_{2})$ and $ \Phi=(\varphi_{1},\varphi_{2})\in\bIPS$, the bilinear form $A_{\ip}(\Theta,\Phi):=a_{\ip}(\theta_1,\varphi_1)+a_{\ip}(\theta_2,\varphi_2)$, trilinear and linear forms
$B_h(\Xi,\Theta,\Phi)$ and $L_h(\Phi)$ are respectively as defined in \eqref{defn_Bh} and \eqref{defn_Lh}.
The existence, local uniqueness and error estimates for the discrete solution of \eqref{vformd_ip} are shown in \cite{CCGMNN_DG,BS_C0IP_VKE} for sufficiently small mesh parameter $h$.

\subsection{Discontinuous Galerkin finite element}
In this section, a discontinuous Galerkin method of \cite{Baker_77,CC_DG_NN_15_Comparison,Feng_Karakashian_07} for the biharmonic part is proposed. Define the bilinear, trilinear and linear forms by, for $\eta_{\dg},\chi_{\dg}$ and $\varphi_{\dg}\in P_2(\cT)$ and the penalty parameter $\sigma_{\dg}>0$
\begin{align}
&a_{\dg}(\eta_{\dg},\chi_{\dg}):=a_{\nc}(\eta_{\dg},\chi_{\dg})\notag\\
&\qquad-\se \left\langle D^2\eta_{\dg}\nu_E\right\rangle_E{\cdot}[\nabla\chi_{\dg}]_{E} \ds-\se \left\langle D^2\chi_{\dg}\nu_E\right\rangle_E{\cdot}[\nabla\eta_{\dg}]_{E} \ds\notag\\
&\qquad\quad+\sum_{E\in\cE}\left(\frac{\sigma_{\dg}}{h_E^3}\int_E[\eta_{\dg}]_E[\chi_{\dg}]_E\ds+\frac{\sigma_{\dg}}{h_E}\int_E\left[\frac{\partial\eta_{\dg}}{\partial\nu_E}\right]_E\left[\frac{\partial\chi_{\dg}}{\partial\nu_E}\right]_E\ds\right).
%&b_{\dg}(\eta_\dg,\chi_\dg,\varphi_\dg):=b_{\nc}(\eta_\dg,\chi_\dg,\varphi_\dg)\text{ and } l_\dg(\varphi_\dg):=l_{\dg}(\varphi_\dg).
\end{align}
In general, the two stabilization terms in the bilinear form may rely on different penalty parameter. The DG norm is defined by $$\|\eta_{\dg}\|_{\dg}^2:=\|\eta_{\dg}\|_{\nc}^2+\sum_{E\in\cE}h_E^{-1}\left\|\left[\frac{\partial\eta_{\dg}}{\partial\nu_E}\right]_E\right\|_{L^2(E)}^2+\sum_{E\in\cE}h_E^{-3}\|\jump{\eta_{\dg}}\|_{L^2(E)}^2.$$
For sufficiently large penalty parameter $\sigma_{\dg}$, the coercivity result  \cite{CCGMNN_DG} $\|\bullet\|_{\dg}^2\lesssim a_{\dg}(\bullet,\bullet)$ on $P_2(\cT)$ holds.
A DGFEM corresponding to \eqref{vform_cts} seeks $\Psi_{\dg}\in \bDGS:=\DGS\times \DGS$ such that
\begin{equation}\label{vformd_dg}
N_\dg(\Psi_{\dg};\Phi_{\dg}):=A_{\dg}(\Psi_{\dg},\Phi_{\dg})+B_{h}(\Psi_{\dg},\Psi_{\dg},\Phi_{\dg})-L_{h}(\Phi_{\dg})=0 \fl \Phi_{\dg} \in  \bDGS,
\end{equation}
where for all $ \Xi=(\xi_{1},\xi_{2}),\Theta=(\theta_{1},\theta_{2})$ and $ \Phi=(\varphi_{1},\varphi_{2})\in\bDGS$, the bilinear form $A_{\dg}(\Theta,\Phi):=a_{\dg}(\theta_1,\varphi_1)+a_{\dg}(\theta_2,\varphi_2)$, trilinear and linear forms
$B_h(\Xi,\Theta,\Phi)$ and $L_h(\Phi)$ are respectively as defined in \eqref{defn_Bh} and \eqref{defn_Lh}. Moreover, the following boundedness result \cite[Lemma~3.12(a)]{CCGMNN_DG} holds
\begin{equation}
B_h(\Xi,\Theta,\Phi)\leq \trinl\Xi\trinr_{\dg}\trinl\Theta\trinr_{\dg}\trinl\Phi\trinr_{\dg}\quad \text{for } \Xi,\Theta,\Phi\in \bX+\bDGS.
\end{equation}
The existence, local uniqueness and error estimates for the discrete solution of \eqref{vformd_dg} are shown in \cite{CCGMNN_DG} for sufficiently small mesh parameter $h$.

\begin{lem}[Enrichment operator]\cite{Georgoulis2011,CCGMNN_DG}\label{enrichment_dG} There exists an enrichment operator $E_h: \St\to S_4(\cT)\subset X$ satisfies, for $m=0,1,2$ 
\begin{align}\label{enrich_apost}
\sum_{K\in\cT}\left|\varphi_{\dg}-E_h\varphi_{\dg}\right|_{H^m(K)}^2  &\lesssim\|h_{\cE}^{1/2-m}[\varphi_{\dg}]_{\cE}\|_{L^2(\Gamma)}^2+\|h_{\cE}^{3/2-m}[\nabla \varphi_{\dg}]_{\cE}\|_{L^2(\Gamma)}^2\lesssim h^{4-2m}\|\varphi_{\dg}\|_{\dg}^2,
\end{align}
where $S_4(\cT)$ is a $C^1$-conforming finite element space consisting of macro-elements.
\end{lem}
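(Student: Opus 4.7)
The plan follows the standard averaging construction for enrichment operators, as in Brenner--Sung and Georgoulis--Houston--Virtanen. Take $S_4(\cT)$ to be a $C^1$-conforming macro-element space whose local degrees of freedom (DOFs) consist of point evaluations of $\varphi$, $\nabla\varphi$, and $D^2\varphi$ at each vertex together with mean normal derivatives on edges (and, if present, any interior DOFs). Define $E_h\varphi_{\dg}\in S_4(\cT)$ by assigning each shared nodal value the arithmetic mean of the corresponding values coming from the triangles in the supporting patch, setting each DOF that lies on $\partial\Omega$ to zero so that $E_h\varphi_{\dg}\in X=\hto$, and choosing any interior DOFs so that $E_h$ reduces to the identity on the $C^1$-conforming subspace of $\St$.

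Fix $K\in\cT$ and set $\omega_K:=(\varphi_{\dg}-E_h\varphi_{\dg})|_K$. Since $\omega_K$ lives in a fixed finite-dimensional polynomial space, a scaling argument on the reference triangle combined with norm equivalence gives
\begin{equation*}
|\omega_K|_{H^m(K)}^2 \lesssim \sum_{i} h_K^{2-2m-2s_i}\,|\delta_i|^2,
\end{equation*}
where each $\delta_i$ is the gap between a nodal value of $\varphi_{\dg}$ on $K$ and the averaged value assigned by $E_h$, and $s_i\in\{0,1,2\}$ is the differentiation order of the $i$-th DOF. Each $\delta_i$ is either a pointwise vertex jump of $\varphi_{\dg}$, $\nabla\varphi_{\dg}$, or $D^2\varphi_{\dg}$, or an edge-mean jump of $\partial_\nu\varphi_{\dg}$. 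Control each pointwise vertex jump by the standard scaled trace inequality
\begin{equation*}
|[\varphi_{\dg}]_E(z)|^2 \lesssim h_E^{-1}\|[\varphi_{\dg}]_E\|_{L^2(E)}^2 + h_E\|[\nabla\varphi_{\dg}]_E\|_{L^2(E)}^2,
\end{equation*}
and analogously for jumps of first-order derivatives at vertices. Sum the contributions over all vertices and edges of the patch; grouping matching powers of $h_E$ yields the first inequality. The second inequality $\lesssim h^{4-2m}\|\varphi_{\dg}\|_{\dg}^2$ is then immediate from the definition of $\|\cdot\|_{\dg}$ together with $h_E\le h$.

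The main obstacle is the Hessian-valued DOFs at vertices, because the DG norm only contains jumps of $\varphi_{\dg}$ and $\nabla\varphi_{\dg}$ across edges, not jumps of $D^2\varphi_{\dg}$. The fact that $\varphi_{\dg}|_K\in P_2(K)$ makes $D^2\varphi_{\dg}$ piecewise constant, so it can be reconstructed from first-derivative data on the edges of $K$; after the correct $h_E$-weighting, any pointwise vertex jump of $D^2\varphi_{\dg}$ is bounded by $h_E^{-1/2}\|[\nabla\varphi_{\dg}]_E\|_{L^2(E)}$ on a neighbouring edge. Carefully tracking these scaling factors across all DOFs, and verifying that the boundary-DOF choice actually produces a function in $\hto$, is the bookkeeping that constitutes the technical core of the proof.
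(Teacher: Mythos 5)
The paper itself offers no proof of this lemma --- it is quoted from \cite{Georgoulis2011,CCGMNN_DG} --- so your attempt must stand on its own. Its overall strategy (average the degrees of freedom of a $C^1$-conforming target space, scale to the reference element, convert nodal discrepancies into edge jumps) is indeed the one used in those references. The decisive flaw sits exactly where you locate the technical core: the vertex Hessian degrees of freedom. Your claim that a pointwise vertex jump of $D^2\varphi_{\dg}$ is bounded by $h_E^{-1/2}\|[\nabla\varphi_{\dg}]_E\|_{L^2(E)}$ on a neighbouring edge is false. Tangential differentiation of the affine function $[\nabla\varphi_{\dg}]_E$ along $E$ only controls the components $[D^2\varphi_{\dg}]_E\,\tau_E$; the normal--normal component $\nu_E^{T}[D^2\varphi_{\dg}]_E\,\nu_E$ is invisible to the jumps of $\varphi_{\dg}$ and $\nabla\varphi_{\dg}$. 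Concretely, take $\varphi_{\dg}$ equal to $x^2/2$ on the triangles with $x>0$ and to $0$ on those with $x<0$, with an interior vertex on the line $x=0$: every jump $[\varphi_{\dg}]_E$ and $[\nabla\varphi_{\dg}]_E$ vanishes on every edge of the patch, yet the Hessians on the two sides differ by the nonzero constant matrix $\mathrm{diag}(1,0)$. An averaged vertex-Hessian DOF then forces $E_h\varphi_{\dg}\neq\varphi_{\dg}$ on that patch while the corresponding right-hand side of \eqref{enrich_apost} is zero, so no enrichment operator into a space with single-valued second derivatives at vertices (Argyris/Bell type) can satisfy the stated bound.

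This is precisely why the lemma insists that $S_4(\cT)$ consists of \emph{macro-elements}: HCT/Douglas--Dupont--Percell--Scott type $C^1$ macro-element spaces carry only point values and gradients at vertices and normal-derivative moments on edges as degrees of freedom, so the averaging discrepancies are exactly the quantities controlled by $[\varphi_{\dg}]_E$ and $[\nabla\varphi_{\dg}]_E$, and the part of your argument devoted to those DOFs goes through. The repair is therefore to remove the vertex-Hessian DOFs from the construction, not to estimate them. Two smaller corrections: the scaling exponent should be $h_K^{2-2m+2s_i}$, not $h_K^{2-2m-2s_i}$ (test $m=0$, $s_i=1$: a unit gradient discrepancy at a vertex produces an $O(h_K)$ discrepancy in the function and hence an $O(h_K^{4})$ contribution to $\|\omega_K\|_{L^2(K)}^2$, not $O(1)$); and when you set the boundary DOFs to zero you should record that the resulting discrepancies are exactly the boundary-edge jumps as the paper defines them, so that they are absorbed into the same right-hand side.
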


\section{Equivalence of approximations}

\subsection{Main results}

Throughout the paper, the oscillation of a function $f\in L^2(\Omega)$ with respect to a triangulation $\cT$ reads
\begin{equation*}
{\rm osc}(f,\cT):=\sqrt{\sum_{K\in\mathcal{T}}h_K^4\|f-\avintK f\dx\|_{L^2(K)}^2} \text{ with } \avintK f\dx:=\frac{1}{|K|}\int_K f\dx.
%{\rm osc}(f,\cT):=\sqrt{\sum_{K\in\mathcal{T}}h_K^4\|f-\frac{1}{|K|}\int_K f\dx\|_{L^2(K)}^2}.
\end{equation*}
Also denote the local oscillation term by ${\rm osc}(f,K):=h_K^2\|f-\avintK f\dx\|_{L^2(K)} $.
\begin{thm}[Error equivalence]\label{error_equiv}
	For sufficiently small mesh parameter $h$, the discrete solutions $\Psi_{\M},\Psi_{\ip}$ and $\Psi_{\dg}$ of the Morley FEM, $C^0$ IP and DGFEM satisfy
	\begin{equation*}
	\trinl\Psi-\Psi_{\M}\trinr_{\nc}\approx \trinl\Psi-\Psi_{\ip}\trinr_{\ip}\approx\trinl\Psi-\Psi_{\dg}\trinr_{\dg}\approx \trinl(1-\Pi_0)D^2\Psi\trinr_{L^2(\Omega)}
	\end{equation*}
	up to some oscillations.
\end{thm}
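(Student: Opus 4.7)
The plan is to establish the three-way equivalence via the common intermediate quantity $\trinl(1-\Pi_0)D^2\Psi\trinr_{L^2(\Omega)}$. Concretely, I would prove (i) the easy lower bound $\trinl(1-\Pi_0)D^2\Psi\trinr_{L^2(\Omega)}\lesssim\trinl\Psi-\Psi_h\trinr_{\star}$ for each $\star\in\{\nc,\ip,\dg\}$, and (ii) the harder upper bound $\trinl\Psi-\Psi_h\trinr_{\star}\lesssim\trinl(1-\Pi_0)D^2\Psi\trinr_{L^2(\Omega)}+{\rm osc}(f,\cT)$. Chaining these through the common right-hand side yields the pairwise equivalences modulo oscillation by transitivity.

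For the lower bound, the key observation is that for any piecewise $P_2$ function $\Psi_h$, the piecewise Hessian $D^2_{\rm pw}\Psi_h$ is $\cT$-piecewise constant. Hence, by orthogonality of the $L^2$-projection $\Pi_0$ onto piecewise constants,
$$\trinl(1-\Pi_0)D^2\Psi\trinr_{L^2(\Omega)}\leq\|D^2\Psi-D^2_{\rm pw}\Psi_h\|_{L^2(\Omega)}=\trinl\Psi-\Psi_h\trinr_{\nc}\leq\trinl\Psi-\Psi_h\trinr_{\star},$$
where the last step uses the nonnegativity of the jump-penalty contributions present in $\trinl\cdot\trinr_{\ip}$ and $\trinl\cdot\trinr_{\dg}$.

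For the upper bound, I would run a nonlinear medius argument separately for each scheme with an identical structure. Fix $\star\in\{\nc,\ip,\dg\}$ and let $E_h$ denote the associated enrichment into a $C^1$-conforming subspace of $\bX$ (Lemma~\ref{enrichment_dG} for DG; companion operators for Morley and IP available from \cite{CCGMNN_DG,Gudi10}). Split
$$\trinl\Psi-\Psi_h\trinr_{\star}\leq\trinl\Psi-E_h\Psi_h\trinr_{\star}+\trinl E_h\Psi_h-\Psi_h\trinr_{\star},$$
where the second summand is controlled by the jump-penalty part of $\trinl\Psi-\Psi_h\trinr_{\star}$ via the enrichment approximation estimate. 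Since $E_h\Psi_h\in\bX$, the first summand is attacked by the inf-sup stability~\eqref{inf_sup_cts} of $DN(\Psi;\cdot,\cdot)$,
$$\beta\,\trinl\Psi-E_h\Psi_h\trinr_{2}\leq\sup_{\Phi\in\bX,\;\trinl\Phi\trinr_{2}=1}\bigl|A(\Psi-E_h\Psi_h,\Phi)+2B(\Psi,\Psi-E_h\Psi_h,\Phi)\bigr|.$$
Substituting the discrete equation $N_{\star}(\Psi_h;\Phi_h)=0$ for a suitable interpolant $\Phi_h$ of $\Phi$, the residual splits into (a) a consistency term that reduces to $\trinl(1-\Pi_0)D^2\Psi\trinr_{L^2(\Omega)}$ via the identity $D^2_{\rm pw}I_M=\Pi_0D^2$ of Lemma~\ref{Morley_Interpolation}; (b) a load residual $L(\Phi)-L_h(\Phi_h)$ controlled by ${\rm osc}(f,\cT)$; and (c) nonlinear residuals arising from $B(\Psi,\Psi,\Phi)-B_h(\Psi_h,\Psi_h,\Phi_h)$.

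The main obstacle is handling (c). Using symmetry of $B_h$ in its first two arguments, one decomposes it as $B_h(\Psi-E_h\Psi_h,\Psi+\Psi_h,\Phi)+B_h(E_h\Psi_h-\Psi_h,\Psi+\Psi_h,\Phi)+B_h(\Psi_h,\Psi_h,\Phi-\Phi_h)$; the boundedness $B_h(\Xi,\Theta,\Phi)\lesssim\trinl\Xi\trinr_{\dg}\trinl\Theta\trinr_{\dg}\trinl\Phi\trinr_{\dg}$ on $\bX+\bDGS$ then produces factors of $\trinl\Psi-\Psi_h\trinr_{\star}$ which must be absorbed back into the left-hand side. This absorption is legitimate only when the prefactor is strictly smaller than $\beta$, which is exactly what the hypothesis of sufficiently small $h$ provides; it is the same mechanism underlying the existence and local uniqueness framework of \cite{CCGMNN_DG,CCGMNN_Semilinear}. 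After absorption, one obtains $\trinl\Psi-\Psi_h\trinr_{\star}\lesssim\trinl(1-\Pi_0)D^2\Psi\trinr_{L^2(\Omega)}+{\rm osc}(f,\cT)$ for each $\star$, and combining with the lower bound yields the three-way equivalence.
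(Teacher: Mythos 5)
Your lower bound is fine and is essentially the paper's own observation: by Lemma~\ref{Morley_Interpolation}(a) the Morley interpolant realizes $\min_{\bv_{\M}}\trinl\Psi-\bv_{\M}\trinr_{\nc}=\trinl(1-\Pi_0)D^2\Psi\trinr_{L^2(\Omega)}$, and $\Pi_0$-orthogonality gives the one-sided bound for all three methods. The upper bound, however, is where your argument breaks, and it breaks structurally. You split $\trinl\Psi-\Psi_h\trinr_{\star}\leq\trinl\Psi-E_h\Psi_h\trinr_{\star}+\trinl E_h\Psi_h-\Psi_h\trinr_{\star}$ and concede that the second summand is ``controlled by the jump-penalty part of $\trinl\Psi-\Psi_h\trinr_{\star}$.'' That jump part \emph{is} a piece of the quantity you are trying to bound, and the constant in the enrichment estimate \eqref{enrich_apost} is a generic, $h$-independent constant that is not smaller than one. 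The absorption mechanism you invoke (``prefactor strictly smaller than $\beta$ for small $h$'') legitimately handles the quadratic nonlinear remainder $B(\Psi-E_h\Psi_h,\Psi-E_h\Psi_h,\Phi)$, but it does nothing for this $O(1)$ self-referential term; the inequality $\trinl\Psi-\Psi_h\trinr_{\star}\leq(\dots)+C\trinl\Psi-\Psi_h\trinr_{\star}$ with $C\gtrsim1$ yields nothing. This is precisely why the paper enriches the discrete \emph{test} function rather than the discrete solution: Theorem~\ref{thm_abs_est} uses the discrete inf-sup (inherited from \eqref{inf_sup_cts} for small $h$) to control $\Psi_h^*-\Psi_h$ for a best approximation $\Psi_h^*$, reducing everything to the residual $\|N_{\star}(\Psi)\|_{\star^*}$, which is then evaluated on $\Phi_{\dg}-E_h\Phi_{\dg}$ in Theorem~\ref{thm_const_osc}.

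Two further ingredients that your sketch elides are in fact the technical core of the paper. First, the claim that the consistency term ``reduces to $\trinl(1-\Pi_0)D^2\Psi\trinr_{L^2(\Omega)}$ via $D^2_{\rm pw}I_M=\Pi_0D^2$'' is only plausible for Morley; for IP and DG the residual contains the edge terms $\sum_{E}[D^2u_{\dg}^*\nu_E]_E\cdot\nu_E\langle\partial\chi/\partial\nu\rangle_E$ and the averages $\langle D^2\chi\,\nu_E\rangle_E\cdot[\nabla u_{\dg}^*]_E$, whose control requires the bubble-function efficiency estimates of Lemmas~\ref{lem:vol_eff}--\ref{lem:jump_eff}; this is where the oscillation ${\rm osc}(f,\cT)$ actually enters. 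Second, even with a correct upper bound of the form $\trinl\Psi-\Psi_{\star}\trinr_{\star}\lesssim\min_{\bv_{\star}}\trinl\Psi-\bv_{\star}\trinr_{\star}+{\rm osc}(f,\cT)$, you still must relate the three best-approximation errors to one another: the Morley interpolant does not belong to $\IPS$, so identifying $\min_{\bv_{\ip}}\trinl\Psi-\bv_{\ip}\trinr_{\ip}$ with $\trinl(1-\Pi_0)D^2\Psi\trinr_{L^2(\Omega)}$ is exactly the nontrivial content of Lemmas~\ref{norm_equivalence}--\ref{equi_best_apprx} imported from \cite{CC_DG_NN_15_Comparison}, which your proposal never invokes. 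To repair the proof, replace the trial-function enrichment by the paper's quasi-optimality plus test-function enrichment, and add the efficiency lemmas and the cross-space best-approximation equivalence explicitly.
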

The proof follows by the following results.

Let $\cN(E)$ be the set of two vertices of an edge $E$. Define the following seminorm found in \cite{CC_DG_NN_15_Comparison}  for all $v_h\in H^2(\cT)$ 
\begin{equation}
\|v_h\|_h^2:=\|v_h\|_{\nc}+\sum_{E\in\cE}\left(\avintE\left[\frac{\partial v_h}{\partial\nu_E}\right]_E\ds\right)^2+\sum_{E\in\cE}h_{E}^{-2}\sum_{z\in\cN(E)}[v_h(z)]_E^2,
\end{equation}
which is a norm on $X+P_2(\cT)$.

\begin{lem}[Discrete norm equivalence]\label{norm_equivalence}\cite[Theorem 4.1]{CC_DG_NN_15_Comparison} The norm $\|\bullet\|_h$ satisfies
	\begin{align*}
	\|\bullet\|_h&=\|\bullet\|_{\nc} \quad \text{ on } X+M(\cT),\\
	\|\bullet\|_h&\approx \|\bullet\|_{\dg} \quad \text{ on } X+P_2(\cT),\\
	\|\bullet\|_h&\approx \|\bullet\|_{\ip} \quad\; \text{ on } X+\ip(\cT).
	\end{align*}
\end{lem}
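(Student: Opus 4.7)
I would split the claim into the identity $\|\bullet\|_h=\|\bullet\|_{\nc}$ on $X+M(\cT)$ and the two norm equivalences on $X+\ip(\cT)$ and $X+P_2(\cT)$; in every case the $\|\bullet\|_{\nc}$ piece appears on both sides, so only the two extra edge contributions of $\|\bullet\|_h$ need to be compared with the IP/DG penalizations. The identity on $X+M(\cT)$ is immediate: for $v\in X=\hto$, continuity of $v$ and of $\nabla v$ together with the homogeneous boundary data make $[v(z)]_E=0$ at every vertex and $\avintE[\partial v/\partial\nu_E]_E\ds=0$ on every edge; for $v\in M(\cT)$ the defining degrees of freedom of the Morley space give these same two properties verbatim (continuity at $\cN(\Omega)$, vanishing at $\cN(\partial\Omega)$, and zero edge-mean of the normal-derivative jump).

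\textbf{Upper bound $\|\bullet\|_h\lesssim\|\bullet\|_{\dg}$ (resp.\ $\|\bullet\|_{\ip}$).} Cauchy-Schwarz on each edge yields $\bigl(\avintE[\partial v/\partial\nu_E]_E\ds\bigr)^{2}\leq h_E^{-1}\|[\partial v/\partial\nu_E]_E\|_{L^2(E)}^{2}$, so the averaged-jump term of $\|\bullet\|_h$ is controlled by the IP/DG normal-jump stabilization. Since $[v]_E$ restricted to $E$ is a polynomial of degree at most two on an interval of length $h_E$, a one-dimensional inverse trace estimate gives $|[v(z)]_E|^{2}\lesssim h_E^{-1}\|[v]_E\|_{L^2(E)}^{2}$, hence $h_E^{-2}[v(z)]_E^{2}\lesssim h_E^{-3}\|[v]_E\|_{L^2(E)}^{2}$; summation produces the remaining DG stabilization. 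In the IP case $[v]_E=0$ since $\ip(\cT)\subset H^1_0(\Omega)$, so this last estimate is not required.

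\textbf{Reverse direction.} This is the main work. On every edge the jumps are polynomials of fixed degree, so I would rely on finite-dimensional norm equivalences on the reference edge, scaled back to $E$. Decompose $[\partial v/\partial\nu_E]_E=c_0+g_\perp$ into its mean $c_0=\avintE[\partial v/\partial\nu_E]_E\ds$ (bounded by the averaged-jump term in $\|\bullet\|_h$) and a zero-mean $P_1$-part $g_\perp$, whose $L^2(E)$-norm is controlled by a one-dimensional Poincar\'e-Wirtinger inequality through $(\partial/\partial s)[\partial v/\partial\nu_E]_E$, and then by the piecewise Hessian on the two adjacent triangles via a discrete trace inequality, i.e.\ by $\|\bullet\|_{\nc}$. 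For $h_E^{-3}\|[v]_E\|_{L^2(E)}^{2}$, write the $P_2$-polynomial $[v]_E$ in terms of its two endpoint values and its midpoint value and recover the midpoint from the exact identity $[v(z_m)]_E=\half\bigl([v(z_1)]_E+[v(z_2)]_E\bigr)-\tfrac{h_E^{2}}{8}[\partial^{2} v/\partial s^{2}]_E$; the remaining tangential-Hessian jump is again absorbed into $\|\bullet\|_{\nc}$ through a discrete trace inequality, while the endpoint terms are exactly the vertex-jump contribution of $\|\bullet\|_h$.

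\textbf{Main obstacle.} The delicate step is precisely this reverse direction: making the scaling argument on polynomials of fixed degree uniform in $h$, and quantifying how the pointwise vertex/midpoint information, together with the averaged normal-derivative jump, controls the full $L^2$-jumps on $E$ through a discrete trace inequality that bounds tangential derivatives on $E$ by the piecewise Hessian on the adjacent triangles. The IP case is comparatively painless because $[v]_E$ vanishes; the DG case requires both parts.
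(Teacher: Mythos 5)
The paper does not prove this lemma at all: it is quoted verbatim from \cite[Theorem 4.1]{CC_DG_NN_15_Comparison}, so there is no in-paper argument to compare against. Judged on its own, your sketch gets the easy parts right: the identity on $X+\M(\cT)$ (all jump contributions of $\|\bullet\|_h$ vanish by the Morley degrees of freedom and by $H^2_0$-conformity), and the upper bounds $\|\bullet\|_h\lesssim\|\bullet\|_{\dg}$, $\|\bullet\|_h\lesssim\|\bullet\|_{\ip}$ via Cauchy--Schwarz on the edge mean and a one-dimensional inverse estimate for the vertex values of the polynomial jump $[v]_E\in P_2(E)$.

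The genuine gap is in the reverse direction, and it is not merely the uniform-scaling issue you flag. Every quantity you propose to absorb into $\|\bullet\|_{\nc}$ by a ``discrete trace inequality'' --- the slope $\tfrac{d}{ds}[\partial v/\partial\nu_E]_E=[\tau_E^{\!\top} D^2v_2\,\nu_E]_E$ of the normal-derivative jump, and the tangential jump $[\partial^2 v/\partial s^2]_E=[\tau_E^{\!\top} D^2v_2\,\tau_E]_E$ in your midpoint identity --- involves only the Hessian of the \emph{discrete summand} $v_2$ in a decomposition $v=w+v_2$ with $w\in X$, $v_2\in P_2(\cT)$. A trace/inverse inequality therefore bounds them by $\|D^2v_2\|_{L^2(\omega_E)}$, not by $\|v\|_{\nc,\omega_E}=\|D^2w+D^2v_2\|_{L^2(\omega_E)}$, and on the sum space $X+P_2(\cT)$ the former is not controlled by the latter (the decomposition is not stable term by term). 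Your argument as written proves the equivalence on $P_2(\cT)$ only, whereas the lemma is applied in Theorem~\ref{error_equiv} precisely to differences $\Psi-\Psi_{\dg}\in\bX+\bDGS$. The missing ingredient is an estimate of the form $h_E\,\big|[\tau_E^{\!\top} D^2v_2\,\nu_E]_E\big|\lesssim \operatorname{dist}_{L^2(\omega_E)}\!\big(D^2v_2,\{D^2w:w\in H^2(\omega_E)\}\big)\le\|D^2v\|_{L^2(\omega_E)}$ (and its $\tau\tau$ analogue), which one obtains by a kernel-plus-scaling argument on the edge patch: a piecewise constant symmetric matrix field whose $\tau\nu$ (resp.\ $\tau\tau$) component jumps across $E$ cannot be approximated in $L^2(\omega_E)$ by Hessians of $H^2(\omega_E)$ functions, and both sides are seminorms on the finite-dimensional space $P_2(K_+)\times P_2(K_-)$ with nested kernels. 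With that lemma in hand, your decomposition of $[\partial v/\partial\nu_E]_E$ into mean plus zero-mean part and your endpoint/midpoint representation of $[v]_E$ go through; without it, the reverse inequalities on $X+P_2(\cT)$ and $X+\ip(\cT)$ are unproved.
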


\begin{lem}[Equivalence of best approximations]\label{equi_best_apprx}\cite[Theorem 3.1]{CC_DG_NN_15_Comparison} For any $v\in X$, the following distances are equivalent	
	\begin{align}
	\min_{v_{\dg}\in P_2(\cT) }\|v-v_{\dg}\|_{h}=\min_{v_{\M}\in M(\cT)} \|v-v_{\M}\|_{h}\approx \min_{v_{\ip}\in \ip(\cT) }\|v-v_{\ip}\|_{h}.
	\end{align}
\end{lem}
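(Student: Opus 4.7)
The plan is to treat the two relations separately. For both, the inclusions $M(\cT)\subset P_2(\cT)$ and $\mathrm{IP}(\cT)\subset P_2(\cT)$ give the trivial directions $\min_{v_{\dg}\in P_2(\cT)}\|v-v_{\dg}\|_h\le \min_{v_M\in M(\cT)}\|v-v_M\|_h$ and $\min_{v_{\dg}\in P_2(\cT)}\|v-v_{\dg}\|_h\le \min_{v_{\ip}\in\mathrm{IP}(\cT)}\|v-v_{\ip}\|_h$. The work lies in the reverse directions, which I would handle by constructing explicit maps $P_2(\cT)\to M(\cT)$ and $P_2(\cT)\to\mathrm{IP}(\cT)$ that do not inflate the $\|\cdot\|_h$-distance to $v\in X$.

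For the Morley equality, I would simply test with $v_M:=I_M v$ from Lemma \ref{Morley_Interpolation}. Part (a) of that lemma gives $D^2_{\mathrm{pw}} I_M v=\Pi_0 D^2 v$, so $\|v-I_M v\|_{\nc}^2=\|(1-\Pi_0)D^2 v\|_{L^2(\Omega)}^2$. Since the piecewise Hessian of any $w_{\dg}\in P_2(\cT)$ is a symmetric piecewise constant field and every such field is attained by some $P_2$ function, the $L^2$-best approximation property of $\Pi_0$ yields $\|v-I_M v\|_{\nc}\le\|v-v_{\dg}\|_{\nc}\le\|v-v_{\dg}\|_h$ for every $v_{\dg}\in P_2(\cT)$. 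The crucial observation is then that the extra edge contributions in $\|v-I_M v\|_h$ \emph{vanish}: since $v\in X$ is $C^1$ across edges, $[\partial_\nu(v-I_Mv)]_E$ has zero edge mean by the Morley DOFs, and vertex jumps $[v-I_Mv]_E(z)$ vanish because both $v$ and $I_Mv$ are single-valued at vertices. Hence $\|v-I_Mv\|_h=\|v-I_Mv\|_{\nc}$, and we conclude $\|v-I_Mv\|_h\le\min_{v_{\dg}\in P_2(\cT)}\|v-v_{\dg}\|_h$, producing the equality with $I_Mv$ as common minimiser.

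For the $C^0$ side I would introduce an averaging $A_\ip\colon P_2(\cT)\to\mathrm{IP}(\cT)$ acting on Lagrange degrees of freedom: at each interior vertex $z$ and each interior edge midpoint $m_E$ set $(A_\ip w_{\dg})(z)$ and $(A_\ip w_{\dg})(m_E)$ to be the averages of the neighbouring values of $w_{\dg}$, and zero at boundary nodes. Applied to a minimiser $v_{\dg}^\ast\in P_2(\cT)$, the piece $w_{\dg}:=v_{\dg}^\ast-A_\ip v_{\dg}^\ast$ lies in $P_2(K)$ on each $K$ with explicit Lagrange DOFs given by vertex and midpoint jumps of $v_{\dg}^\ast$. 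A scaling/unisolvence argument on the reference triangle then bounds $|w_{\dg}|_{H^2(K)}$ and the remaining normal-derivative edge-mean contributions of $A_\ip v_{\dg}^\ast$ by the three jump seminorms that make up $\|\cdot\|_h$. Using $v\in X$ (which has vanishing jumps) to replace jumps of $v_{\dg}^\ast$ by jumps of $v-v_{\dg}^\ast$ and invoking the triangle inequality gives $\|v-A_\ip v_{\dg}^\ast\|_h\lesssim\|v-v_{\dg}^\ast\|_h$, hence $\min_{\mathrm{IP}}\|v-\cdot\|_h\lesssim\min_{P_2}\|v-\cdot\|_h$.

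The subtle point is the \emph{equality} (not mere equivalence) between the $P_2$ and Morley minima: it rests on the algebraic miracle that $I_Mv$ simultaneously realises the optimal piecewise-constant Hessian \emph{and} annihilates the three edge terms in $\|\cdot\|_h$. Once that is identified, the IP direction is conceptually routine but requires careful bookkeeping of the scaling of vertex, midpoint and normal-derivative jump contributions to match the definition of $\|\cdot\|_h$ exactly.
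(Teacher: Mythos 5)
This lemma is not proved in the paper at all: it is imported verbatim from \cite[Theorem 3.1]{CC_DG_NN_15_Comparison}, so there is no in-paper argument to compare against. Judged on its own, your Morley half is complete and is exactly the mechanism of the cited source: since every symmetric piecewise-constant field is the piecewise Hessian of some element of the fully discontinuous space $P_2(\cT)$, the best-approximation property of $\Pi_0$ together with Lemma~\ref{Morley_Interpolation}(a) gives $\|v-I_Mv\|_{\nc}\le\min_{v_{\dg}\in P_2(\cT)}\|v-v_{\dg}\|_{\nc}$, and the Morley degrees of freedom together with $v\in X$ kill both remaining edge contributions of $\|\bullet\|_h$, so $I_Mv$ is a common minimiser and the first relation is a genuine equality. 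This part I would accept as written.

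The IP half has one concrete gap. The correction $w_{\dg}=v_{\dg}^*-A_{\ip}v_{\dg}^*$ carries Lagrange degrees of freedom at the vertices \emph{and at the edge midpoints}, and the midpoint deviation is governed by the jump $[v_{\dg}^*(m_E)]_E$. That quantity is \emph{not} one of the ``three jump seminorms that make up $\|\bullet\|_h$'': the norm $\|\bullet\|_h$ penalises only the vertex jumps $h_E^{-2}[v_h(z)]_E^2$ and the integral mean of the normal-derivative jump, so the reference-element scaling you invoke does not directly bound $h_E^{-2}\left|[v_{\dg}^*(m_E)]_E\right|^2$ by $\|v-v_{\dg}^*\|_h^2$. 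The step can be closed, but only by appealing to the norm equivalence of Lemma~\ref{norm_equivalence}: since $[v-v_{\dg}^*]_E=-[v_{\dg}^*]_E$ is a univariate quadratic on $E$, an inverse estimate gives
\begin{equation*}
h_E^{-2}\left|[v_{\dg}^*(m_E)]_E\right|^2\lesssim h_E^{-3}\left\|\jump{v-v_{\dg}^*}\right\|_{L^2(E)}^2\le\|v-v_{\dg}^*\|_{\dg}^2\lesssim\|v-v_{\dg}^*\|_h^2 .
\end{equation*}
With that insertion (and with the normal-derivative edge means of $A_{\ip}v_{\dg}^*$ handled by trace and inverse inequalities as you indicate), your averaging argument yields $\min_{v_{\ip}\in\ip(\cT)}\|v-v_{\ip}\|_h\lesssim\min_{v_{\dg}\in P_2(\cT)}\|v-v_{\dg}\|_h$, and the reverse inequality is the trivial inclusion. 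So the strategy is sound, but as written the IP step silently uses a bound that requires Lemma~\ref{norm_equivalence} rather than a bare unisolvence argument.
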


The above equivalence result shows that the interpolations $I_{\M}\Psi$ for Morley element, and interpolations for $C^0IP$ and DGFEM satisfy the error equivalence. In \cite{CC_DG_NN_15_Comparison}, it has been shown that, in particular, the $P_2$ finite element approximations for linear biharmonic problem satisfies the error equivalence up to some data oscillation. However, it is not clear whether the computed finite element solutions $\Psi_{\M}, \Psi_{\dg}$ and $\Psi_{\ip}$ for the solution $\Psi$ of the semilinear \vket will follow the same equivalence result. We proceed to show that the error equivalence results are true for the computed FE solutions up to data oscillation, for sufficiently small mesh parameter $h$. In  the next theorem, an abstract error estimate results of \cite[]{CCGMNN_Semilinear} for nonconforming FEM is stated, and the result can be easily extended to $C^0$IP and DGFEMs as well, hence we omit the details.
\begin{thm}[Error estimates]\label{thm_abs_est}
Let $\Psi=(u,v)$ be the nonsingular solution of \eqref{vform_cts}. Let $\Psi_{\M}, \Psi_{\ip}$ and $\Psi_{\dg}$ be discrete solution of \eqref{vformd_nc}, \eqref{vformd_ip} and \eqref{vformd_dg} respectively. The errors of Morley, $C^0$IP and DG FEMs are quasi-optimal with respect to their norms in the sense that 
	\begin{align}
	\trinl\Psi-\Psi_{\M}\trinr_{\nc}&\lesssim \min_{\bv_{\M}\in \bMS}\trinl\Psi-\bv_{\M}\trinr_{\nc}+\|N_{\nc}(\Psi)\|_{\bMS^*},\label{BestApprxNC}\\
	\trinl\Psi-\Psi_{\ip}\trinr_{\ip}&\lesssim \min_{\bv_{\ip}\in \bIPS}\trinl\Psi-\bv_{\ip}\trinr_{\ip}+\|N_{\ip}(\Psi)\|_{\bIPS^*},\label{BestApprxIP}\\
	\trinl\Psi-\Psi_{\dg}\trinr_{\dg}&\lesssim \min_{\bv_{\M}\in \bDGS}\trinl\Psi-\bv_{\dg}\trinr_{\dg}+\|N_{\dg}(\Psi)\|_{\bDGS^*}\label{BestApprxDG}
	\end{align}
for sufficiently small mesh parameter $h$ of  $\cT$.	
\end{thm}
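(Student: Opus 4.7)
The strategy is a nonlinear Banach--Nečas--Babuška argument. I will write out the proof of \eqref{BestApprxNC}; the estimates \eqref{BestApprxIP} and \eqref{BestApprxDG} follow by the same reasoning once the Morley interpolation is replaced by the enrichment operator of Lemma~\ref{enrichment_dG} (or its $C^0$IP analogue from \cite{CCGMNN_DG}).

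\textbf{Discrete inf-sup.} The first step is to transfer \eqref{inf_sup_cts} to the discrete setting: there exists $\beta_0>0$ independent of $h$ such that, for all sufficiently small $h$,
$$\inf_{\Theta_{\M}\in\bMS\setminus\{0\}}\sup_{\Phi_{\M}\in\bMS\setminus\{0\}}\frac{DN_{\nc}(\Psi;\Theta_{\M},\Phi_{\M})}{\trinl\Theta_{\M}\trinr_{\nc}\trinl\Phi_{\M}\trinr_{\nc}}\geq \beta_0.$$
Given $\Theta_{\M}\in\bMS$, I would smooth it to a conforming $J_h\Theta_{\M}\in\bX$ via a companion operator, apply \eqref{inf_sup_cts} to obtain a conforming test $\Phi\in\bX$, and set $\Phi_{\M}:=I_M\Phi$. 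The discrepancy between $DN_{\nc}(\Psi;\Theta_{\M},\Phi_{\M})$ and $DN(\Psi;J_h\Theta_{\M},\Phi)$ is controlled by $\trinl(I-J_h)\Theta_{\M}\trinr_{\nc}$ and $\trinl\Phi-I_M\Phi\trinr_{\nc}$ together with the boundedness of $A$ and $B$, and both of these vanish with $h$ by standard interpolation estimates and the $H^{2+\alpha}$-regularity of $\Psi$.

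\textbf{Algebraic identity and splitting.} Fix $\bv_{\M}\in\bMS$ and put $\Theta_{\M}:=\bv_{\M}-\Psi_{\M}$. Symmetry of $B_h$ in the first two arguments yields the polarization identity
$$N_{\nc}(\bv_{\M};\Phi_{\M})-N_{\nc}(\Psi_{\M};\Phi_{\M})=DN_{\nc}(\Psi;\Theta_{\M},\Phi_{\M})+B_h(\bv_{\M}+\Psi_{\M}-2\Psi,\Theta_{\M},\Phi_{\M}).$$
Since $N_{\nc}(\Psi_{\M};\Phi_{\M})=0$, this gives $DN_{\nc}(\Psi;\Theta_{\M},\Phi_{\M})$ as $N_{\nc}(\bv_{\M};\Phi_{\M})$ minus the quadratic remainder. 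I would then write $N_{\nc}(\bv_{\M};\Phi_{\M})=N_{\nc}(\Psi;\Phi_{\M})+[N_{\nc}(\bv_{\M};\Phi_{\M})-N_{\nc}(\Psi;\Phi_{\M})]$. The first summand is bounded by $\|N_{\nc}(\Psi)\|_{\bMS^*}\trinl\Phi_{\M}\trinr_{\nc}$ by definition; the second, by re-applying the same polarization identity with $\Psi_{\M}$ replaced by $\Psi$, reduces to $DN_{\nc}(\Psi;\bv_{\M}-\Psi,\Phi_{\M})+B_h(\bv_{\M}-\Psi,\bv_{\M}-\Psi,\Phi_{\M})$, and is controlled by $C\trinl\Psi-\bv_{\M}\trinr_{\nc}\trinl\Phi_{\M}\trinr_{\nc}$ via boundedness of $A_{\nc}$ and $B_h$. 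The quadratic remainder satisfies
$$|B_h(\bv_{\M}+\Psi_{\M}-2\Psi,\Theta_{\M},\Phi_{\M})|\lesssim \bigl(\trinl\Psi-\bv_{\M}\trinr_{\nc}+\trinl\Psi-\Psi_{\M}\trinr_{\nc}\bigr)\trinl\Theta_{\M}\trinr_{\nc}\trinl\Phi_{\M}\trinr_{\nc}.$$

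\textbf{Absorption and conclusion.} Dividing by $\trinl\Phi_{\M}\trinr_{\nc}$, taking the supremum, and invoking the discrete inf-sup produces $\beta_0\trinl\Theta_{\M}\trinr_{\nc}$ on the left. The factor $\trinl\Psi-\Psi_{\M}\trinr_{\nc}$ in the remainder is made smaller than $\beta_0/(2C)$ by invoking the a priori convergence $\trinl\Psi-\Psi_{\M}\trinr_{\nc}\to 0$ established in \cite{CCGMNN_Semilinear}; choosing $\bv_{\M}=I_M\Psi$ (say) likewise makes $\trinl\Psi-\bv_{\M}\trinr_{\nc}$ small, so the quadratic term is absorbed into the left-hand side. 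Combining with the triangle inequality $\trinl\Psi-\Psi_{\M}\trinr_{\nc}\leq\trinl\Psi-\bv_{\M}\trinr_{\nc}+\trinl\Theta_{\M}\trinr_{\nc}$ and taking the infimum over $\bv_{\M}\in\bMS$ delivers \eqref{BestApprxNC}. The main obstacle is the apparent chicken-and-egg loop between the quasi-optimality bound and the a priori smallness of $\trinl\Psi-\Psi_{\M}\trinr_{\nc}$ needed in the absorption step; it is broken by using the coarser a priori convergence of \cite{CCGMNN_Semilinear} first, then upgrading to the sharp best-approximation form above.
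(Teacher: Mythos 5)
The paper does not actually prove Theorem~\ref{thm_abs_est}: it quotes the Morley case from \cite{CCGMNN_Semilinear} and asserts the $C^0$IP and DG cases are analogous. Your architecture — a discrete inf-sup condition for $DN_{\nc}(\Psi;\cdot,\cdot)$, the polarization identity from the symmetry of $B_h$ in its first two arguments, and absorption of the quadratic remainder for small $h$ — is exactly the argument of that reference; your algebraic identities are correct, and your resolution of the apparent circularity (use the coarser a priori convergence of \cite{CCGMNN_Semilinear} first, then upgrade) is the right way to break the loop.

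The one step that would fail as written is your justification of the discrete inf-sup. You claim the discrepancy between $DN_{\nc}(\Psi;\Theta_{\M},I_M\Phi)$ and $DN(\Psi;J_h\Theta_{\M},\Phi)$ is controlled by $\trinl(I-J_h)\Theta_{\M}\trinr_{\nc}$ and $\trinl\Phi-I_M\Phi\trinr_{\nc}$, "both of which vanish with $h$." Neither does: for an arbitrary $\Theta_{\M}\in\bMS$ with $\trinl\Theta_{\M}\trinr_{\nc}=1$, the quantity $\trinl(I-J_h)\Theta_{\M}\trinr_{\nc}$ is only bounded by the (order-one) distance of $\Theta_{\M}$ to $\bX$, and the sup-achieving $\Phi\in\bX$ from \eqref{inf_sup_cts} carries no extra regularity, so $\trinl\Phi-I_M\Phi\trinr_{\nc}=\trinl(1-\Pi_0)D^2\Phi\trinr_{L^2(\Omega)}$ is also order one; the $H^{2+\alpha}$ regularity of $\Psi$ is irrelevant because neither $\Theta_{\M}$ nor $\Phi$ is $\Psi$. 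The correct argument must exploit that these order-one discrepancies only ever multiply the \emph{compact} part $2B(\Psi,\cdot,\cdot)$, where they are measured in weaker norms (e.g. $\|(I-J_h)\Theta_{\M}\|_{L^2(\Omega)}\lesssim h^2\trinl\Theta_{\M}\trinr_{\nc}$), while the principal part is handled exactly: by Lemma~\ref{Morley_Interpolation}(a), $D^2_{\rm pw}I_M=\Pi_0 D^2$ and $D^2\Theta_{\M}$ is piecewise constant, so $A_{\nc}(\Theta_{\M},\Phi-I_M\Phi)=0$; the remaining lower-order mismatch is then controlled by a Schatz-type duality/compactness argument using the elliptic regularity of the linearized adjoint problem. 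Either supply that argument or simply cite the discrete inf-sup condition from \cite{CCGMNN_Semilinear}. With that repaired, the rest of your proof goes through, including the extensions to \eqref{BestApprxIP} and \eqref{BestApprxDG} via the enrichment operator of Lemma~\ref{enrichment_dG}.
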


In the following lemmas, we compute the residuals $\|N_{\nc}(\Psi)\|_{\bMS^*},\|N_{\ip}(\Psi)\|_{\bIPS^*}$ and $\|N_{\dg}(\Psi)\|_{\bDGS^*}$ using the technique of {\it a posteriori} error estimates.
We start with some essential terms related to the residuals which are proved to be efficient. 
\begin{lem}\label{lem:vol_eff}
Let $\Psi=(u,v)$ be the nonsingular solution of \eqref{vform_cts}. For $\Theta_h=(\theta_{h,1},\theta_{h,2})\in\bDGS$, it holds
\begin{align}
 h_K^2\|f+[\theta_{h,1},\theta_{h,2}]\|_{L^2(K)}+ h_K^2\|[\theta_{h,1},\theta_{h,1}]\|_{L^2(K)}\lesssim\trinl \Psi-\Theta_h\trinr_{L^2(K)}+{\rm osc}(f,K).
\end{align}
\end{lem}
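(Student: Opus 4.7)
The plan is to adapt the Verfürth bubble--function argument for efficiency to the semilinear setting, exploiting that on each triangle $K$ the von K\'{a}rm\'{a}n brackets $[\theta_{h,1},\theta_{h,2}]|_K$ and $[\theta_{h,1},\theta_{h,1}]|_K$ are \emph{constants}, since $\theta_{h,1},\theta_{h,2}\in P_2(\cT)$ have piecewise constant Hessians. I focus on the first term; the second is entirely analogous, using $-\tfrac12[u,u]=\Delta^2 v$ in place of $f+[u,v]=\Delta^2 u$.

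First I would split off the oscillation. Writing $c_K:=[\theta_{h,1},\theta_{h,2}]|_K$ and $r_K:=\Pi_0 f + c_K$, the triangle inequality gives
\begin{equation*}
h_K^2\|f+c_K\|_{L^2(K)} \le h_K^2\|f-\Pi_0 f\|_{L^2(K)} + h_K^2\|r_K\|_{L^2(K)} \le \mathrm{osc}(f,K) + h_K^2\|r_K\|_{L^2(K)},
\end{equation*}
so it remains to control $h_K^2\|r_K\|_{L^2(K)}$, where $r_K$ is a constant on $K$.

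Next I would apply the standard bubble--function technique, choosing an $H^2_0(K)$-bubble $b_K$ (polynomial of sufficient degree, say the sixth power of the barycentric product, so that both $b_K$ and $\nabla b_K$ vanish on $\partial K$) and setting $\phi_K:=b_K r_K$, extended by zero to $X=\hto$. The constancy of $r_K$ on $K$ yields the equivalence $\|r_K\|_{L^2(K)}^2\lesssim\int_K b_K r_K^2 = \int_K \phi_K r_K$, together with the inverse estimate $\|D^2\phi_K\|_{L^2(K)}\lesssim h_K^{-2}\|r_K\|_{L^2(K)}$. Using the PDE $f=\Delta^2 u-[u,v]$ and $\Delta^2\theta_{h,1}=0$ on $K$ (since $\theta_{h,1}\in P_2$), together with the bilinear decomposition $[u,v]-[\theta_{h,1},\theta_{h,2}]=[u-\theta_{h,1},v]+[\theta_{h,1},v-\theta_{h,2}]$, I rewrite
\begin{equation*}
r_K = (\Pi_0 f - f) + \Delta^2(u-\theta_{h,1}) - [u-\theta_{h,1},v] - [\theta_{h,1},v-\theta_{h,2}].
\end{equation*}
Substituting into $\int_K\phi_K r_K$ and integrating by parts twice in the biharmonic term (which is justified by $\phi_K\in H^2_0(K)$) produces four contributions that I bound by Cauchy--Schwarz/H\"older: the data oscillation piece contributes $h_K^{-2}\mathrm{osc}(f,K)\|r_K\|_{L^2(K)}$; the biharmonic piece contributes $h_K^{-2}|u-\theta_{h,1}|_{H^2(K)}\|r_K\|_{L^2(K)}$; and the two bracket pieces are estimated via $\|\phi_K\|_{L^{p'}(K)}\|D^2 w\|_{L^2(K)}\|D^2 w'\|_{L^{p}(K)}$ for appropriate $p>2$, exploiting the regularity $\Psi\in(H^{2+\alpha}(\Omega))^2$ with $\alpha>1/2$ to give $\|D^2 u\|_{L^p(K)}+\|D^2 v\|_{L^p(K)}\lesssim\|\Psi\|_{(H^{2+\alpha}(\Omega))^2}$, a bounded constant. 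Dividing by $\|r_K\|_{L^2(K)}$ and multiplying by $h_K^2$ then yields the claimed estimate.

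The main obstacle is the nonlinear bracket terms: a naive application of H\"older would require $D^2 u,D^2 v\in L^\infty$, which is not available. The remedy is to invest the regularity of the regular solution into $L^p(K)$ control with $p>2$ and compensate on $\phi_K$ using $\|\phi_K\|_{L^{p'}(K)}\lesssim h_K^{2/p'-1}\|r_K\|_{L^2(K)}$ from the constancy of $r_K$; the extra powers of $h_K$ match the $h_K^2$ on the left-hand side and the $h_K^{-2}$ produced by the biharmonic inverse estimate, preserving the scaling. An analogous argument with $\Delta^2 v = -\tfrac12[u,u]$, $v-\theta_{h,2}$ replacing $u-\theta_{h,1}$, and no data term (so no oscillation) bounds $h_K^2\|[\theta_{h,1},\theta_{h,1}]\|_{L^2(K)}$, completing the lemma.
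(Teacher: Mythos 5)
Your argument is correct and is essentially the standard Verf\"urth bubble-function efficiency proof that the paper invokes by citing \cite{CCGMNN_DG} (Lemma~5.3 there) and that it also uses in its own proof of Lemma~\ref{lem:jump_eff}: exploit that the brackets of $P_2$ functions are piecewise constant, split off ${\rm osc}(f,K)$, test the weak formulation with $\phi_K=b_Kr_K\in H^2_0(K)$ extended by zero, and bound the bilinearly decomposed bracket difference by H\"older's inequality. One minor simplification: since $\phi_K$ is a fixed-degree polynomial, $\|\phi_K\|_{L^\infty(K)}\lesssim h_K^{-1}\|r_K\|_{L^2(K)}$, so the generalized H\"older inequality with exponents $(2,2,\infty)$ already controls the bracket terms (exactly as in the paper's estimate of the second term in Lemma~\ref{lem:jump_eff}), making the $L^p$--$H^{2+\alpha}$ detour unnecessary; note also that, just as in the paper, the resulting constant absorbs $\|\Theta_h\|_{H^2(K)}$, which is harmless in the intended application where $\Theta_h$ is the best approximation of $\Psi$.
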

The above local volume efficiency can be established from \cite[Lemma~5.3]{CCGMNN_DG}, hence proof has been omitted.
\begin{lem}\label{lem:jump_eff}
	Let $\Psi=(u,v)$ be the nonsingular solution of \eqref{vform_cts}. For $\Theta_h=(\theta_{h,1},\theta_{h,2})\in\bDGS$, it holds
\begin{align}
&h_E^{1/2}\|\jump{D^2 \theta_{h,1}\,\nu_E}\cdot\nu_E\|_{L^2(E)}+h_E^{1/2}\|\jump{D^2 \theta_{h,2}\,\nu_E}\cdot\nu_E\|_{L^2(E)}\notag\\
&\quad\lesssim \trinl\Psi-\Theta_{h}\trinr_{H^2(\omega_E)}+{\rm osc}(f,\omega_E).
\end{align}
\end{lem}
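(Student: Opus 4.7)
\medskip

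\noindent\textbf{Proof proposal.} The plan is to apply the standard Verf\"urth-type edge bubble argument, adapted to the biharmonic operator and the semilinear coupling. I will treat the two components in parallel; let $i\in\{1,2\}$ and denote by $\Psi_i$ the $i$-th component of $\Psi$ (so $\Psi_1=u$, $\Psi_2=v$) and set $J_i:=\jump{D^2\theta_{h,i}\,\nu_E}\cdot\nu_E$ on $E$. Because $\theta_{h,i}\in P_2(T)$ for each $T\subset\omega_E$, the scalar $J_i$ is a constant along $E$, which makes every bubble-function estimate below an equivalence rather than just an inequality.

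The first step is to construct a cubic ``normal-derivative'' edge bubble $\psi_E$ on $\omega_E$ satisfying $\psi_E=0$ on $\partial\omega_E\cup E$ and $\partial\psi_E/\partial\nu_E|_E = J_i\,b_E$, where $b_E=\lambda_{a_1}\lambda_{a_2}$ is the standard quadratic edge bubble associated with the endpoints of $E$. On each $T\subset\omega_E$ one may take $\psi_E|_T = c_T\,J_i\,\lambda_{a_3}\lambda_{a_1}\lambda_{a_2}$ with $c_T$ adjusted so that the normal derivative on $E$ matches $J_i b_E$. Since $\psi_E$ and $\nabla\psi_E$ vanish on $\partial\omega_E$, $\psi_E\in H^2_0(\omega_E)\subset X$ after zero extension. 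Standard bubble function scalings give
\begin{equation*}
\|\psi_E\|_{L^2(\omega_E)}\lesssim h_E^{3/2}\|J_i\|_{L^2(E)},\qquad \|D^2\psi_E\|_{L^2(\omega_E)}\lesssim h_E^{-1/2}\|J_i\|_{L^2(E)},
\end{equation*}
together with the norm equivalence $\int_E J_i^2\,b_E\,ds\approx \|J_i\|_{L^2(E)}^2$.

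Next, I would integrate by parts twice on each $T\subset\omega_E$. Since $\theta_{h,i}\in P_2(T)$ implies $\Delta^2\theta_{h,i}=0$ and $\nabla\Delta\theta_{h,i}=0$ on $T$, the volume integral collapses to edge terms; the contributions on $\partial\omega_E$ vanish because $\psi_E$ and $\nabla\psi_E$ do, and on $E$ the tangential part of $\nabla\psi_E$ is zero because $\psi_E|_E\equiv 0$. This yields
\begin{equation*}
\int_{\omega_E} D^2\theta_{h,i}:D^2\psi_E\,dx \;=\; \int_E J_i\,\Big(\tfrac{\partial\psi_E}{\partial\nu_E}\Big)\,ds \;=\; \int_E J_i^2\,b_E\,ds \;\gtrsim\; \|J_i\|_{L^2(E)}^2.
\end{equation*}
Combining this with the continuous PDE tested against $\psi_E\in H^2_0(\omega_E)$, namely $A(\Psi_i,\psi_E) = \int_{\omega_E} g_i\,\psi_E\,dx$ with $g_1:=[u,v]+f$ and $g_2:=-\tfrac12[u,u]$, and then splitting $g_i$ into its discrete residual plus a nonlinear cross-difference, one obtains
\begin{equation*}
\|J_i\|_{L^2(E)}^2 \lesssim \Bigl(\|D^2(\theta_{h,i}-\Psi_i)\|_{L^2(\omega_E)} + h_E^2\|g_i-g_i^h\|_{L^2(\omega_E)} + h_E^2\|g_i^h\|_{L^2(\omega_E)}\Bigr)\frac{\|J_i\|_{L^2(E)}}{h_E^{1/2}},
\end{equation*}
where $g_1^h:=[\theta_{h,1},\theta_{h,2}]+f$ and $g_2^h:=-\tfrac12[\theta_{h,1},\theta_{h,1}]$. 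Dividing by $\|J_i\|_{L^2(E)}$ and multiplying by $h_E^{1/2}$ gives the desired bound, provided the three terms on the right reproduce $\trinl\Psi-\Theta_h\trinr_{H^2(\omega_E)} + {\rm osc}(f,\omega_E)$ after summing $i=1,2$. The middle term $h_E^2\|g_i^h\|_{L^2(\omega_E)}$ is handled directly by the volume efficiency of Lemma \ref{lem:vol_eff}, and the first term is already of the desired form.

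The step I expect to be the main obstacle is the cross-difference $h_E^2\|g_i-g_i^h\|_{L^2(\omega_E)}$, i.e., bounding quantities like $h_E^2\|[u-\theta_{h,1},v]+[\theta_{h,1},v-\theta_{h,2}]\|_{L^2(\omega_E)}$ by $\trinl\Psi-\Theta_h\trinr_{H^2(\omega_E)}$ modulo oscillation. Starting from the pointwise bound $|[\eta,\chi]|\le 3|D^2\eta||D^2\chi|$, the natural split uses a local $L^\infty$ control of $D^2\theta_{h,i}$ (which is piecewise constant on $\cT$, hence locally bounded by $\|D^2\Psi_i\|_{L^\infty(\omega_E)}+\|D^2(\theta_{h,i}-\Psi_i)\|_{L^2(\omega_E)}h_E^{-1}$ via inverse estimates), combined with the extra factor $h_E^2$ that absorbs the negative power of $h_E$ coming from the inverse estimate. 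This requires using the regularity $\Psi\in\bH^{2+\alpha}(\Omega)$ and an argument analogous to \cite[Lemma~5.3]{CCGMNN_DG}; the factor $h_E^2$ is exactly what makes the ``non-polynomial'' part a genuine higher-order perturbation. The boundary-edge case uses the homogeneous boundary conditions of $\Psi$ and needs only a one-triangle bubble with the same construction restricted to $\omega_E=T$.
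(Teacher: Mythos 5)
Your overall architecture is the same as the paper's: an edge-bubble test function, piecewise integration by parts using $\Delta^2\theta_{h,i}=0$, substitution of the continuous PDE, and a three-way split into the $H^2$-error, the nonlinear cross-difference, and the discrete residual (absorbed by Lemma \ref{lem:vol_eff}). The genuine gap is in the bubble construction, and it is precisely the point where fourth-order problems differ from second-order ones. Your $\psi_E|_T=c_T\,J_i\,\lambda_{a_1}\lambda_{a_2}\lambda_{a_3}$ vanishes on $\partial\omega_E$, but its \emph{gradient} does not: on an outer edge $\{\lambda_{a_1}=0\}$ one has $\nabla\psi_E=c_T J_i\,\lambda_{a_2}\lambda_{a_3}\nabla\lambda_{a_1}\neq 0$ in the interior of that edge. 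Hence the zero extension of $\psi_E$ is only $H^1$, not $H^2_0(\omega_E)$, so you may not test the weak form \eqref{wforma} with it; moreover your integration-by-parts identity silently drops the nonvanishing terms $\int_{\partial\omega_E}(D^2\theta_{h,i}\,n)\cdot n\,\partial_n\psi_E\,ds$, so the claimed lower bound $\int_{\omega_E}D^2\theta_{h,i}:D^2\psi_E\gtrsim\|J_i\|^2_{L^2(E)}$ does not follow. The paper avoids this by taking the largest rhombus $\tilde K\subset\omega_E$ with diagonal $E$ and using $b_E=b_l\,b_{\tilde K}^3$ (an affine factor times the \emph{cube} of the rhombus bubble), which is $C^2$, lies in $H^2_0(\tilde K)$, and has the prescribed normal derivative $h_E^{-1}b_{\tilde K}^3$ and zero tangential derivative on $E$; the jump $J_i$ is then extended constantly in the normal direction and multiplied into this bubble. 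You need some such higher-regularity bubble (your cubic raised to a power, a rhombus bubble, or a $C^1$ macro-element bubble) for the argument to close.

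Two smaller points. First, you flag the nonlinear cross-difference $h_E^2\|g_i-g_i^h\|_{L^2(\omega_E)}$ as the main obstacle and propose an $L^\infty$ inverse estimate on $D^2\theta_{h,i}$; the paper's route is simpler and avoids inverse estimates entirely: rewrite the term as $-2b(u-\theta_{h,1},v,\rho_E)-2b(\theta_{h,1},v-\theta_{h,2},\rho_E)$ and apply the generalized H\"older inequality with $\|\rho_E\|_{L^\infty(\omega_E)}\lesssim\|\rho_E\|_{H^2(\omega_E)}$ (two-dimensional Sobolev embedding), which yields $\|\Psi-\Theta_h\|_{H^2(\omega_E)}(\|\Psi\|_{H^2(\omega_E)}+\|\Theta_h\|_{H^2(\omega_E)})\|h_E^{-2}\rho_E\|_{L^2(\omega_E)}$ directly. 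Second, your exploitation of the fact that $J_i$ is constant on $E$ is correct and harmless, but it is not needed once the bubble is built correctly; the paper's scaling step \eqref{rhoest} works for any polynomial jump.
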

\begin{proof}
First we  prove the efficiency of edge term $\|h_E^{1/2}\left[D^2 \theta_{h,1}\nu_E\right]_E\cdot\nu_E\|^2_{L^2(E)}$. For each internal edge $E\in \cE(\Omega)$,  define $\tilde{K}\subset \omega_E:=K_+\cup K_-$ to be the largest rhombus contained in the patch $\omega_E$ that has $E$ as one diagonal. Also, define $b_{\tilde{K}}:\tilde{K}\map \bR$ to be the bubble function on the rhombus $\tilde{K}$. Let $b_l:\tilde{K}\map \bR$ be an affine function having value zero along the edge $E$, such that $(\nabla b_l\cdot\nu_E)|_E=h_E^{-1}$. Using the above definitions,  consider the function $b_E$ with $b_E|_{\tilde{K}}:=b_l b_{\tilde{K}}^3$ and $b_E:=0$ on $\Omega\setminus\tilde{K}$, which has the following properties \cite{Georgoulis2011}:
\begin{align*}
& b_E\in C^2(\Omega)\cap\hto,\quad \nabla b_E\cdot\nu_E|_E=h_E^{-1}b_{\tilde{K}}^3|_E,\quad \nabla b_E\cdot\tau_E|_E=0.
%&(\{\nabla b_E\}_E\cdot\nu_E)|_E=h_E^{-1}b_{\tilde{K}}^3|_E,\quad \nabla b_{\tilde{K}}\cdot\nu_E|_E=0 \text{  and  } \{\nabla b_E\}_E=0\text{ on } \cE\setminus \{E\}.
\end{align*}
Extend $[D^2 \theta_{h,1}\nu_E]\cdot\nu_E$ constantly in the normal direction to $E$ and set $\rho_E:=h_E^{-1}[D^2 \theta_{h,1}\nu_E]\cdot\nu_E\, b_E$.

\noindent Incorporate the properties of $b_E$ with $\rho_E$ and integrate by parts to obtain
\begin{align*}
& \|h_E^{-1}[D^2 \theta_{h,1}\nu_E]_E\cdot\nu_E\|_{L^2(E)}^2  \lesssim  \|b_{\tilde{K}}^{{3/2}} h_E^{-1}[D^2 \theta_{h,1}\nu_E]_E\cdot\nu_E\|_{L^2(E)}^2\\
&=\int_E \{\nabla\rho_E\cdot\nu_E \}_E[D^2 \theta_{h,1}\nu_E]_E\cdot\nu_E\ds
=\int_{\omega_E} D^2 \theta_{h,1}:D^2\rho_E\dx -\int_{\omega_E}\Delta^2 \theta_{h,1}\rho_E\dx.
\end{align*}
Since $\Theta_h\in\bDGS$, the above equation and \eqref{wforma} leads to
\begin{align*}
&\|h_E^{-1}[D^2 \theta_{h,1}\nu_E]_E\cdot\nu_E\|_{L^2(E)}^2\leq\int_{\omega_E} D^2 (\theta_{h,1}-u):D^2\rho_E\dx +\int_{\omega_E} [u,v]\rho_E\dx+\int_{\omega_E} f\rho_E\dx\\
&=\int_{\omega_E} D^2 (\theta_{h,1}-u):D^2\rho_E\dx +\int_{\omega_E} \left([u,v]-[\theta_{h,1},\theta_{h,2}]\right)\rho_E\dx+\int_{\omega_E}\left(f+[\theta_{h,1},\theta_{h,2}]\right) \rho_E\dx.
\end{align*}
The first term is estimated by Cauchy and inverse inequalities
\begin{equation*}
\int_{\omega_E} D^2 (\theta_{h,1}-u):D^2\rho_E\dx\leq\|u-\theta_{h,1}\|_{H^2(\omega_E)}\|\rho_E\|_{H^2(\omega_E)}\lesssim\|u-\theta_{h,1}\|_{\htk}\|h_E^{-2}\rho_E\|_{L^2(\omega_E)}.
\end{equation*}
The second term is estimated generalized \Holder inequality  
\begin{align*}
&\int_{\omega_E} \left([u,v]-[\theta_{h,1},\theta_{h,2}]\right)\rho_E\dx=-2b(u,v,\rho_E)+2b(\theta_{h,1},\theta_{h,2},\rho_E)\\
&=-2b(u-\theta_{h,1},v,\rho_E)-2b(\theta_{h,1},v-\theta_{h,2},\rho_E)\notag \\
&\lesssim \|\Psi-\Theta_{h}\|_{H^2(\omega_E)}(\|\Psi\|_{H^2(\omega_E)}+\|\Theta_{h}\|_{H^2(\omega_E)})\|\rho_E\|_{L^{\infty}(\omega_E)}\lesssim \|\Psi-\Theta_{h}\|_{H^2(\omega_E)}\|\rho_E\|_{H^2(\omega_E)}\notag\\
&\lesssim\|\Psi-\Theta_{h}\|_{H^2(\omega_E)}\|h_E^{-2}\rho_E\|_{L^2(\omega_E)}.
\end{align*}
The last term is estimated as
\begin{align*}
&\int_{\omega_E}\left(f+[\theta_{h,1},\theta_{h,2}]\right)\rho_E\dx\leq h_E^2\|f+[\theta_{h,1},\theta_{h,2}]\|_{L^2(\omega_E)}\|h_E^{-2}\rho_E\|_{L^2(\omega_E)}.
\end{align*}
The efficiency of the volume term $\eta_K$ and above displayed equations lead to
\begin{align}
\|h_E^{-1}[D^2 \theta_{h,1}\nu_E]_E\cdot\nu_E\|_{L^2(E)}^2&\lesssim\left( \|\Psi-\Theta_{h}\|_{H^2(\omega_E)}+{\rm osc}(f,\omega_E)\right)\|h_E^{-2}\rho_E\|_{L^2(\omega_E)}.\label{estder2}
\end{align}

\noindent Let $l(s)$ denote the length of the intersection of the line normal to $E$, crossing $E$ at the point $s\in E$ and $\tilde{K}$. Then
\begin{align}
\|\rho_E\|_{L^2(\omega_E)}&\lesssim h_E^{-1}\|[D^2 \theta_{h,1}\nu_E]_E\cdot\nu_E\|_{L^2(\omega_E)}=h_E^{-1}\left(\int_E\left|[D^2 \theta_{h,1}\nu_E]_E\cdot\nu_E\right|^2 l(s)\ds\right)^{1/2}\notag\\
&\lesssim \|h_E^{-1/2}[D^2 \theta_{h,1}\nu_E]_E\cdot\nu_E\|_{L^2(E)}.\label{rhoest}
\end{align}
Combine \eqref{estder2} and \eqref{rhoest}  to obtain
\begin{equation*}
h_E^{1/2}\|[D^2 \theta_{h,1}\nu_E]_E\cdot\nu_E\|_{L^2(E)}\lesssim \|\Psi-\Theta_{h}\|_{H^2(\omega_E)}\|\Psi\|_{H^2(\omega_E)}+{\rm osc}(f,\omega_E).
\end{equation*}
A similar argument leads to an estimate for the edge term
\begin{equation*}
h_E^{1/2}\|[D^2 \theta_{h,2}\nu_E]_E\cdot\nu_E\|_{L^2(E)}\lesssim \|\Psi-\Theta_{h}\|_{H^2(\omega_E)}\|\Psi\|_{H^2(\omega_E)}+{\rm osc}(f,\omega_E).
\end{equation*}
The  above two equations complete the proof.
%The proof completes after summing the above two local edge efficiency terms over all edges.
\end{proof}

The next theorem establishes that the residual term for DG method are equivalent to best approximation up to a higher-order oscillation.
\begin{thm}\label{thm_const_osc} 
	Let $\Psi=(u,v)$ be the nonsingular solution of \eqref{vform_cts}. The consistency term $\|N_{\dg}(\Psi)\|_{\bDGS^*}$ has the estimate
	\begin{equation}\label{consist_osc}
	\|N_{\dg}(\Psi)\|_{\bDGS^*}\lesssim \min_{\bv_{\dg}\in \bDGS}\trinl\Psi-\bv_{\dg}\trinr_{\dg}+{\rm osc}(f,\cT).
	\end{equation}		
\end{thm}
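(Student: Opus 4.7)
The plan is to estimate $\|N_{\dg}(\Psi)\|_{\bDGS^*}$ by testing against arbitrary $\Phi_{\dg}\in\bDGS$ and exploiting the enrichment operator of Lemma~\ref{enrichment_dG}. Set $\Phi_c:=E_h\Phi_{\dg}\in\bX$ (componentwise) and $\Xi:=\Phi_{\dg}-\Phi_c$. Since $\Psi,\Phi_c\in\bX$, all jumps of $\Psi,\nabla\Psi,\Phi_c,\nabla\Phi_c$ vanish, so that $A_{\dg}(\Psi,\Phi_c)=A(\Psi,\Phi_c)$, $B_h(\Psi,\Psi,\Phi_c)=B(\Psi,\Psi,\Phi_c)$ and $L_h(\Phi_c)=L(\Phi_c)$; the weak formulation \eqref{vform_cts} then forces $N_{\dg}(\Psi;\Phi_c)=0$, and therefore $N_{\dg}(\Psi;\Phi_{\dg})=N_{\dg}(\Psi;\Xi)$. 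From Lemma~\ref{enrichment_dG} and the continuity of $\Phi_c,\nabla\Phi_c$ one obtains $\trinl\Xi\trinr_{\dg}\lesssim\trinl\Phi_{\dg}\trinr_{\dg}$ together with the local $L^2$-bound $\sum_{K\in\cT}h_K^{-4}\|\xi_i\|_{L^2(K)}^2\lesssim\trinl\Phi_{\dg}\trinr_{\dg}^2$.

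For an arbitrary $\Theta_h\in\bDGS$, the plan is to split
\begin{equation*}
N_{\dg}(\Psi;\Xi)=\bigl(A_{\dg}(\Psi-\Theta_h,\Xi)+B_h(\Psi,\Psi,\Xi)-B_h(\Theta_h,\Theta_h,\Xi)\bigr)+N_{\dg}(\Theta_h;\Xi).
\end{equation*}
The boundedness of $A_{\dg}$ and $B_h$ on $\bX+\bDGS$ controls the first bracket by $\trinl\Psi-\Theta_h\trinr_{\dg}\trinl\Xi\trinr_{\dg}$ (up to a harmless factor depending on $\trinl\Psi\trinr_{\dg}$ and $\trinl\Theta_h\trinr_{\dg}$). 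In the remaining term one uses that $\Delta^2\theta_{h,i}=0$ and $\nabla\Delta\theta_{h,i}=0$ on every triangle, so that piecewise integration by parts of $a_{\nc}(\theta_{h,i},\xi_i)$ yields only edge contributions, and the $\langle D^2\theta_{h,i}\nu_E\rangle_E\cdot[\nabla\xi_i]_E$ part cancels against the matching consistency term of $a_{\dg}$. This reduces $N_{\dg}(\Theta_h;\Xi)$ to (i) the volume residuals with integrands $f+[\theta_{h,1},\theta_{h,2}]$ and $[\theta_{h,1},\theta_{h,1}]$ and (ii) edge residuals with factors $[D^2\theta_{h,i}\nu_E]_E\cdot\langle\nabla\xi_i\rangle_E$, $\langle D^2\xi_i\nu_E\rangle_E\cdot[\nabla\theta_{h,i}]_E$, and the DG penalties in $[\theta_{h,i}]_E$ and $[\partial\theta_{h,i}/\partial\nu_E]_E$.

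The volume part (i) is estimated by Cauchy--Schwarz with weights $h_K^{\pm 2}$, the local $L^2$ enrichment bound above, and Lemma~\ref{lem:vol_eff}, contributing $\lesssim(\trinl\Psi-\Theta_h\trinr_{\dg}+{\rm osc}(f,\cT))\trinl\Phi_{\dg}\trinr_{\dg}$. For the edge part (ii), the fact that $\Psi\in\bX\cap\bH^{2+\alpha}(\Omega)$ gives $[\Psi_i]_E=0,[\nabla\Psi_i]_E=0,[D^2\Psi_i\nu_E]_E=0$ on interior edges, so that every jump of $\Theta_h$ equals the corresponding jump of $-(\Psi-\Theta_h)$; the penalty and $[\nabla\theta_{h,i}]_E$ factors are thereby absorbed directly into $\trinl\Psi-\Theta_h\trinr_{\dg}$ (the tangential piece $[\partial\theta_{h,i}/\partial\tau_E]_E$ is converted by the edge-polynomial inverse estimate $\|\partial[\theta_{h,i}]_E/\partial\tau_E\|_{L^2(E)}\lesssim h_E^{-1}\|[\theta_{h,i}]_E\|_{L^2(E)}$), while $\langle D^2\xi_i\nu_E\rangle_E$ is handled by a trace/inverse inequality together with Lemma~\ref{enrichment_dG}. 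The key sum $\sum_E h_E\|[D^2\theta_{h,i}\nu_E]_E\|_{L^2(E)}^2$ is split into its $\nu_E$-$\nu_E$ and $\nu_E$-$\tau_E$ components: the former is covered by Lemma~\ref{lem:jump_eff}, and the latter equals $\partial[\partial\theta_{h,i}/\partial\nu_E]_E/\partial\tau_E$ and, by the same inverse trick, is dominated by $h_E^{-1}\|[\partial\theta_{h,i}/\partial\nu_E]_E\|_{L^2(E)}$, which sits inside $\trinl\Psi-\Theta_h\trinr_{\dg}^2$. Assembling the pieces, dividing by $\trinl\Phi_{\dg}\trinr_{\dg}$, and finally minimising over $\Theta_h\in\bDGS$ gives \eqref{consist_osc}. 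The chief obstacle is precisely this treatment of the full matrix-valued jump $[D^2\theta_{h,i}\nu_E]_E$, because Lemma~\ref{lem:jump_eff} captures only its normal-normal projection and the regularity $H^{2+\alpha}(\Omega)$, $\alpha\in(1/2,1]$, is insufficient for a direct $H^3$-trace argument on the tangential component.
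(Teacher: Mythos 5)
Your proposal is correct and follows essentially the same route as the paper: both use the enrichment operator to reduce $N_{\dg}(\Psi;\Phi_{\dg})$ to $N_{\dg}(\Psi;\Phi_{\dg}-E_h\Phi_{\dg})$, split off a discrete comparison function (your arbitrary $\Theta_h$, minimised at the end, versus the paper's best approximation $\Psi_{\dg}^*$, which is equivalent), integrate by parts on each triangle, control the volume residuals via Lemma~\ref{lem:vol_eff}, and decompose $[D^2\theta_{h,i}\,\nu_E]_E$ into its normal--normal part (Lemma~\ref{lem:jump_eff}) and its normal--tangential part, which is rewritten as the tangential derivative of $[\partial\theta_{h,i}/\partial\nu_E]_E$ and absorbed by an inverse estimate into the DG norm of the error. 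The "chief obstacle" you identify is precisely the point the paper handles in the same way.
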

\begin{proof}
	Let $\Psi_{\dg}^*=(u_{\dg}^*,v_{\dg}^*)\in\bDGS$ be the best approximation of $\Psi=(u,v)$ with respect to DG norm $\trinl\bullet\trinr_{\dg}$ in the discrete space $\bDGS$.  There exists $\Phi_{\dg}=(\phi_{\dg,1},\phi_{\dg,2})\in\bDGS$ with $\trinl\Phi_{\dg}\trinr_{\dg}=1$ such that $\|N_{\dg}(\Psi)\|_{\bDGS^*}=N_{\dg}(\Psi,\Phi_{\dg})$. Denote $\boldsymbol{\chi}=(\chi_1,\chi_2):=\Phi_{\dg}-E_{\dg}\Phi_{\dg}$. Since $\Psi\in \bX\cap {\bf H}^{2+\alpha}(\Omega)$ is the solution of \eqref{vform_cts}, $N_{\dg}(\Psi,E_{\dg}\Phi_{\dg})=N(\Psi,E_{\dg}\Phi_{\dg})=0$. This implies $N_{\dg}(\Psi,\Phi_{\dg})=N_{\dg}(\Psi,\Phi_{\dg}-E_{\dg}\Phi_{\dg})=N_{\dg}(\Psi,\boldsymbol{\chi})$. The definition of $N_{\dg}$ reads
	\begin{align}\label{defn_Nh}
	&N_{\dg}(\Psi,\boldsymbol{\chi})=A_{\dg}(\Psi,\boldsymbol{\chi})+B_{h}(\Psi,\Psi,\boldsymbol{\chi})-F(\boldsymbol{\chi})\notag\\
	&=A_{\dg}(\Psi-\Psi_{\dg}^*,\boldsymbol{\chi})+\left(B_{h}(\Psi,\Psi,\boldsymbol{\chi})-B_{h}(\Psi_{\dg}^*,\Psi_{\dg}^*,\boldsymbol{\chi})\right)\notag\\
	&\quad+A_{\dg}(\Psi_{\dg}^*,\boldsymbol{\chi})+B_{h}(\Psi_{\dg}^*,\Psi_{\dg}^*,\boldsymbol{\chi})-F(\boldsymbol{\chi}).
	\end{align}
	First we obtain a bound for $A_{\dg}(\Psi_{\dg}^*,\boldsymbol{\chi})$ and then we proceed for  the estimate \eqref{consist_osc}. Start with one of the component $a_{\dg}(u_{\dg}^*,\chi_1)$ of $A_{\dg}(\Psi_{\dg}^*,\boldsymbol{\chi})$ as:
	\begin{align}
	&a_{\dg}(u_{\dg}^*,\chi_1)=\sik D^2u_{\dg}^*:D^2\chi_1\dx-\se\avg{D^2u_{\dg}^*\nu_E}\cdot\jump{\nabla \chi_1}\notag\\
	&-\se\avg{D^2\chi_1\nu_E}\cdot\jump{\nabla u_{\dg}^*}\ds+\sum_{E\in\cE}\frac{\sigma_{\dg}}{h_E}\int_E\jump{\nabla u_{\dg}^*}\cdot\jump{\nabla\chi_1}\ds+\sum_{E\in\cE}\frac{\sigma_{\dg}}{h_E^3}\int_E\jump{ u_{\dg}^*}\jump{\chi_1}\ds.\label{adg_expand}
	\end{align}
	Since $u_{\dg}^*\in P_2(\cT)$ is piecewise quadratic polynomial, an integration by parts for the first term of the above equation yields
	\begin{align}\label{int_by_parts}
	\sik D^2u_{\dg}^*:D^2\chi_1\dx=\se\avg{D^2u_{\dg}^*\nu_E}\cdot\jump{\nabla \chi_1}\ds+\sie\jump{D^2u_{\dg}^*\nu_E}\cdot\avg{\nabla \chi_1}\ds.
	\end{align}
	Using the above equations \eqref{adg_expand}-\eqref{int_by_parts}, we obtain
	\begin{align}
	a_{\dg}(u_{\dg}^*,\chi_1)&=\sie\jump{D^2u_{\dg}^*\nu_E}\cdot\avg{\nabla \chi_1}\ds-\se\avg{D^2\chi_1\nu_E}\cdot\jump{\nabla u_{\dg}^*}\ds\notag\\
	&\quad+\sum_{E\in\cE}\frac{\sigma_1}{h_E}\int_E\jump{\nabla u_{\dg}^*}\cdot\jump{\nabla\chi_1}\ds+\sum_{E\in\cE}\frac{\sigma_1}{h_E^3}\int_E\jump{ u_{\dg}^*}\jump{\chi_1}\ds.\label{adg_simple}
	%&\lesssim \|u-u_{\dg}^*\|_{\dg}+{\rm osc}(f+[u,v],\cT).
	\end{align}
	A use of the gradient representation $\nabla \chi_1=\frac{\partial\chi_1}{\partial\nu}\nu+\frac{\partial\chi_1}{\partial\tau}\tau$, in the first term of the above yields
	\begin{align}
	\sie\jump{D^2u_{\dg}^*\nu_E}\cdot\avg{\nabla \chi_1}\ds&=\sie\jump{D^2u_{\dg}^*\nu_E}\cdot\nu_E\avg{\frac{\partial\chi_1}{\partial\nu}}\ds\notag\\
	&\quad+\sie\jump{D^2u_{\dg}^*\nu_E}\cdot\tau_E\avg{\frac{\partial\chi_1}{\partial\tau}}\ds.\label{jump_split}
	\end{align}
   The second term of the above equation is estimated by the Cauchy--Schwarz inequality as
\begin{align}
&\sie\jump{D^2u_{\dg}^*\nu_E\cdot\tau}\avg{\frac{\partial\chi_1}{\partial\tau}}\ds \notag \\
&\qquad \leq \bigg{(}\sum_{E\in\cE(\Omega)}\|h_E^{1/2}\jump{D^2 u_{\dg}^*\,\nu_E}\cdot\tau_E\|_{L^2(E)}^2\bigg{)}^{1/2}\|h_{\cE}^{-1/2}\left\langle {\partial\chi_1}/{\partial\tau}\right\rangle_{\cE}\|_{L^2(\Gamma)}.\label{eqn_D2unu}
\end{align}
Set $\psi_E(s):=\left[\frac{\partial u_{\dg}^*}{\partial \nu}\right]_E$ on $E\in\cE(\Omega)$. {An inverse inequality implies}
\begin{align}
&\|h_E^{1/2}[D^2 u_{\dg}^*\,\nu_E]_E\cdot\tau_E\|_{L^2(E)}=\|h_E^{1/2}\frac{\partial\psi_E}{\partial s}\|_{L^2(E)}\notag\\
&\qquad\lesssim \|h_E^{-1/2}\psi_E\|_{L^2(E)}=h_E^{-1/2}\|[\jump{\nabla u_{\dg}^*\cdot\nu_E}\|_{L^2(E)}\leq h_E^{-1/2}\|\jump{\nabla (u-u_{\dg}^*)\cdot\nu_E}\|_{L^2(E)}.\label{eqn_mix_der}
\end{align}
\noindent The trace inequality  and enrichment lemma lead to
\begin{align}
&\|h_{\cE}^{-1/2}\left\langle {\partial\chi_1}/{\partial\tau}\right\rangle_{\cE}  \|_{L^2(\Gamma)}^2\lesssim\sum_{K\in\cT}h_K^{-1}\|\nabla\chi_1\|_{L^2(\partial K)}^2\notag\\
&\qquad \lesssim\sum_{K\in\cT}h_K^{-1}\left(h_K^{-1}\|\chi_1\|_{H^1(K)}^2+h_K\|\chi_1\|_{H^2(K)}^2\right)\lesssim\trinl\phi_{\dg,1}\trinr_{\dg}^2\leq 1. \label{est_int_avg}
\end{align}
The above two equations \eqref{eqn_mix_der} and \eqref{est_int_avg} yield the estimate 
\begin{align}
&\sie\jump{D^2u_{\dg}^*\nu_E\cdot\tau}\avg{\frac{\partial\chi_1}{\partial\tau}}\ds\lesssim\|u-u_{\dg}^*\|_{\dg}.\label{est_eqn_D2unu}
\end{align}
The Cauchy-Schwarz inequality leads to an estimate for the first term of \eqref{jump_split} as
\begin{align}
&\sie\jump{D^2u_{\dg}^*\nu_E}\cdot\nu_E\avg{\frac{\partial\chi_1}{\partial\nu}}\ds\notag\\ 
&\qquad \leq \bigg{(}\sum_{E\in\cE(\Omega)}\|h_E^{1/2}\jump{D^2 u_{\dg}^*\,\nu_E}\cdot\nu_E\|_{L^2(E)}^2\bigg{)}^{1/2}\|h_{\cE}^{-1/2}\left\langle {\partial\chi_1}/{\partial\tau}\right\rangle_{\cE}\|_{L^2(\Gamma)}.\label{eqn_D2unn}
\end{align}
The Lemma~\ref{lem:jump_eff} of efficiency yields
\begin{align}
&\bigg{(}\sum_{E\in\cE(\Omega)}\|h_E^{1/2}\jump{D^2 u_{\dg}^*\,\nu_E}\cdot\nu_E\|_{L^2(E)}^2\bigg{)}^{1/2}\lesssim \|u-u_{\dg}^*\|_{\dg}+{\rm osc}(f,\cT).
\end{align}
The Cauchy-Schwarz, the trace inequality and enrichment  Lemma~\ref{enrichment_dG} yield an estimate for second term of \eqref{adg_simple} :
\begin{align}
\se\avg{D^2\chi_1\nu_E}\cdot\jump{\nabla u_{\dg}^*}\ds\lesssim\| \phi_{\dg,1}\|_{\dg}\|h_\cE^{-1/2}[\nabla u_{\dg}^*]_E\|_{\Gamma}\leq \|h_\cE^{-1/2}[\nabla u_{\dg}^*]_E\|_{\Gamma}. 
\end{align}
Combining the estimates for first two terms of \eqref{adg_simple} and adding-subtracting $u$ in the jump terms of $\jump{\nabla u_{\dg}^*}$ and $\jump{u_{\dg}^*}$, we obtain
\begin{equation}
a_{\dg}(u_{\dg}^*,\chi_1)\lesssim \|u-u_{\dg}^*\|_{\dg}+{\rm osc}(f,\cT).
\end{equation}
Similar result for $a_{\dg}(v_{\dg}^*,\chi_2)$ yields an estimate
\begin{equation}
A_{\dg}(\Psi_{\dg}^*,\boldsymbol{\chi})\lesssim \trinl\Psi-\Psi_{\dg}^*\trinr_{\dg}+ {\rm osc}(f,\cT).
\end{equation}
	The Cauchy-Schwarz and enrichment lemma lead to
	\begin{equation}
	B_{\dg}(\Psi_{\dg}^*,\Psi_{\dg}^*,\boldsymbol{\chi})-F(\boldsymbol{\chi})\lesssim\sum_{K\in\cT}h_K^2\|f+[u_{\dg}^*,v_{\dg}^*]\|_{L^2(K)}+\sum_{K\in\cT}h_K^2\|[u_{\dg}^*,u_{\dg}^*]\|_{L^2(K)}.
	\end{equation}
	The above two displayed equations with Lemma~\ref{lem:vol_eff} imply
	\begin{align}
	&A_{\dg}(\Psi_{\dg}^*,\boldsymbol{\chi})+B_{\dg}(\Psi_{\dg}^*,\Psi_{\dg}^*,\boldsymbol{\chi})-F(\boldsymbol{\chi})\lesssim \min_{\bv_{\dg}\in \bDGS}\trinl\Psi-\bv_{\dg}\trinr_{\dg}+{\rm osc}(f,\cT).
	\end{align}
%	The triangle inequality and Cauchy--Schwarz inequality lead to
%	\begin{align}
%	&\sum_{K\in\cT}h_K^2\|f+[u,v]\|_{L^2(K)}+\sum_{K\in\cT}h_K^2\|[u,u]\|_{L^2(K)}\leq\sum_{K\in\cT}h_K^2 \|f+[u_{\dg}^*,v_{\dg}^*]\|_{L^2(K)}\notag\\
%	&+\sum_{K\in\cT}h_K^2\|[u_{\dg}^*,u_{\dg}^*]\|_{L^2(K)} +\sum_{K\in\cT}h_K^2 \|[u,v]-[u_{\dg}^*,v_{\dg}^*]\|_{L^2(K)}+\sum_{K\in\cT}h_K^2\|[u,u]-[u_{\dg}^*,u_{\dg}^*]\|_{L^2(K)}\notag\\
%	&\quad\lesssim\trinl\Psi-\Psi_{\dg}^*\trinr_{\dg}+{\rm osc}(f,\cT).
%	\end{align}
    The boundedness of  $A_{\dg}(\bullet,\bullet)$ and $B_h(\bullet,\bullet,\bullet)$, and Lemma~\ref{enrichment_dG} yield
    \begin{equation}
    A_{\dg}(\Psi-\Psi_{\dg}^*,\boldsymbol{\chi})+\left(B_{h}(\Psi,\Psi,\boldsymbol{\chi})-B_{h}(\Psi_{\dg}^*,\Psi_{\dg}^*,\boldsymbol{\chi})\right)\lesssim \trinl\Psi-\Psi_{\dg}^*\trinr_{\dg}\trinl\boldsymbol{\chi}\trinr_{\dg}\lesssim\trinl\Psi-\Psi_{\dg}^*\trinr_{\dg}.
    \end{equation}
	Since $\Phi_{\dg}\in\bDGS$ is arbitrary, this completes the proof.
\end{proof}

The next result for $C^0$IP method follows exactly similar way. 
\begin{cor}\label{cor_const_IP}
	Let $\Psi=(u,v)$ be the nonsingular solution of \eqref{vform_cts}. The consistency term $\|N_{\ip}(\Psi)\|_{\bIPS^*}$ has the estimate
\begin{equation}\label{consist_osc_IP}
\|N_{\ip}(\Psi)\|_{\bIPS^*}\lesssim \min_{\bv_{\ip}\in \bIPS}\trinl\Psi-\bv_{\ip}\trinr_{\ip}+{\rm osc}(f,\cT).
\end{equation}		
\end{cor}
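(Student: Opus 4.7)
The plan is to follow the proof of Theorem \ref{thm_const_osc} essentially verbatim, with the important simplification that every element of $\ip(\cT)$ lies in $H^1_0(\Omega)$. This $C^0$-conformity annihilates the jumps $[\chi_1]_E$ and $[\nabla \chi_1 \cdot \tau_E]_E$ for any test function, so the tangential edge residual and the $h_E^{-3}$ penalty contribution that appeared in the DG argument drop out automatically; only the normal-normal edge residual and the normal-derivative penalty survive.

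First I would fix a best approximation $\Psi_{\ip}^* \in \bIPS$ of $\Psi$ in the norm $\trinl\cdot\trinr_{\ip}$ and a test vector $\Phi_{\ip} \in \bIPS$ with $\trinl\Phi_{\ip}\trinr_{\ip} = 1$ attaining $\|N_{\ip}(\Psi)\|_{\bIPS^*} = N_{\ip}(\Psi,\Phi_{\ip})$. Since $\ip(\cT) \subset P_2(\cT)$, one may either invoke the restriction of the operator in Lemma~\ref{enrichment_dG} or use the standard $C^0$IP enrichment operator $E_h : \ip(\cT) \to X \cap H^2(\Omega)$ of Brenner--Sung. Because $[\varphi_{\ip}]_E = 0$ for $\varphi_{\ip} \in \ip(\cT)$, the analogue of \eqref{enrich_apost} collapses to $\sum_{K\in\cT} |\varphi_{\ip} - E_h\varphi_{\ip}|_{H^m(K)}^2 \lesssim h^{4-2m}\|\varphi_{\ip}\|_{\ip}^2$ for $m = 0,1,2$. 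Setting $\boldsymbol{\chi} := \Phi_{\ip} - E_h\Phi_{\ip}$ and using $N_{\ip}(\Psi, E_h\Phi_{\ip}) = N(\Psi, E_h\Phi_{\ip}) = 0$ from \eqref{vform_cts}, one reduces to $\|N_{\ip}(\Psi)\|_{\bIPS^*} = N_{\ip}(\Psi, \boldsymbol{\chi})$.

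Next I would split $N_{\ip}(\Psi,\boldsymbol{\chi})$ by inserting $\pm\,A_{\ip}(\Psi_{\ip}^*, \boldsymbol{\chi})$ and $\pm\,B_h(\Psi_{\ip}^*, \Psi_{\ip}^*, \boldsymbol{\chi})$ exactly as in \eqref{defn_Nh}. Boundedness of $A_{\ip}$ on $\bX + \bIPS$, boundedness of $B_h$ (argued as in the DG case), and $\trinl\boldsymbol{\chi}\trinr_{\ip} \lesssim \trinl\Phi_{\ip}\trinr_{\ip} \leq 1$ bound the terms $A_{\ip}(\Psi - \Psi_{\ip}^*, \boldsymbol{\chi})$ and $B_h(\Psi,\Psi,\boldsymbol{\chi}) - B_h(\Psi_{\ip}^*, \Psi_{\ip}^*, \boldsymbol{\chi})$ by $\trinl\Psi - \Psi_{\ip}^*\trinr_{\ip}$. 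It remains to estimate $A_{\ip}(\Psi_{\ip}^*, \boldsymbol{\chi}) + B_h(\Psi_{\ip}^*, \Psi_{\ip}^*, \boldsymbol{\chi}) - L_h(\boldsymbol{\chi})$.

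For this I would expand $a_{\ip}(u_{\ip}^*, \chi_1)$ and integrate by parts elementwise as in \eqref{int_by_parts}. Because $\chi_1 \in H^1_0(\Omega)$, the tangential gradient is single-valued across interior edges, so the analogue of the split \eqref{jump_split} reduces to the single term $\sie \jump{D^2 u_{\ip}^* \nu_E} \cdot \nu_E \avg{\partial \chi_1/\partial \nu_E} \ds$. Cauchy--Schwarz together with the efficiency Lemma~\ref{lem:jump_eff} and the trace/enrichment estimate $\|h_{\cE}^{-1/2}\avg{\partial \chi_1/\partial \nu_E}\|_{L^2(\Gamma)} \lesssim \trinl\phi_{\ip,1}\trinr_{\ip} \leq 1$ bound it by $\|u - u_{\ip}^*\|_{\ip} + {\rm osc}(f, \cT)$. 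The symmetric consistency term and the IP penalty term together contribute $\|h_{\cE}^{-1/2}[\partial u_{\ip}^*/\partial \nu_E]_E\|_{L^2(\Gamma)}$, which, after adding and subtracting $u$, is $\lesssim \|u - u_{\ip}^*\|_{\ip}$. The volume residual is handled via $B_h(\Psi_{\ip}^*, \Psi_{\ip}^*, \boldsymbol{\chi}) - L_h(\boldsymbol{\chi}) \lesssim \sum_{K\in\cT} h_K^2 (\|f + [u_{\ip}^*, v_{\ip}^*]\|_{L^2(K)} + \|[u_{\ip}^*, u_{\ip}^*]\|_{L^2(K)})$ together with Lemma~\ref{lem:vol_eff}. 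Summing and taking the minimum over $\bv_{\ip} \in \bIPS$ yields \eqref{consist_osc_IP}.

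The mildly delicate point is verifying the enrichment bound in the $C^0$IP norm rather than the DG norm; once the jump $[\varphi_{\ip}]_E = 0$ is used to collapse one term in \eqref{enrich_apost}, the rest of the argument is a direct transcription of Theorem~\ref{thm_const_osc}.
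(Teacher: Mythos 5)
Your overall strategy is the right one and is exactly what the paper intends: the paper offers no separate proof of Corollary~\ref{cor_const_IP} beyond the remark that it ``follows exactly similar way'' from Theorem~\ref{thm_const_osc}, and your transcription (enrichment, the splitting \eqref{defn_Nh}, elementwise integration by parts, the efficiency Lemmas~\ref{lem:vol_eff} and \ref{lem:jump_eff}) is the correct way to fill that in. The simplifications you identify from $\ip(\cT)\subset H^1_0(\Omega)$ are also mostly right: $\jump{\chi_1}=0$ kills the $h_E^{-3}$ contribution, and $\jump{\nabla\chi_1}=\jump{\partial\chi_1/\partial\nu_E}\nu_E$ is what makes the elementwise integration by parts cancel against the (normal--normal) consistency term of $a_{\ip}$.

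There is, however, one concrete misstep. You claim that after integration by parts the analogue of the split \eqref{jump_split} ``reduces to the single term'' $\sie\jump{D^2 u_{\ip}^*\nu_E}\cdot\nu_E\avg{\partial\chi_1/\partial\nu}\ds$, on the grounds that the tangential gradient of $\chi_1$ is single-valued. Single-valued is not the same as zero: in \eqref{jump_split} the tangential contribution involves the \emph{average} $\avg{\partial\chi_1/\partial\tau}$, which for a continuous $\chi_1$ equals the common trace of $\partial\chi_1/\partial\tau$ and does not vanish. (What vanishes is the \emph{jump} $\jump{\partial\chi_1/\partial\tau}$, which enters a different term.) So the term
\begin{equation*}
\sie\jump{D^2 u_{\ip}^*\nu_E}\cdot\tau_E\,\avg{\frac{\partial\chi_1}{\partial\tau}}\ds
\end{equation*}
survives and must still be estimated. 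Fortunately the DG argument \eqref{eqn_D2unu}--\eqref{est_eqn_D2unu} transfers verbatim: the inverse inequality \eqref{eqn_mix_der} bounds $\|h_E^{1/2}\jump{D^2u_{\ip}^*\nu_E}\cdot\tau_E\|_{L^2(E)}$ by $h_E^{-1/2}\|\jump{\nabla(u-u_{\ip}^*)\cdot\nu_E}\|_{L^2(E)}$, and the sum of these squares is part of $\|u-u_{\ip}^*\|_{\ip}^2$; combined with $\|h_{\cE}^{-1/2}\avg{\partial\chi_1/\partial\tau}\|_{L^2(\Gamma)}\lesssim 1$ from trace and enrichment, this term is again $\lesssim\|u-u_{\ip}^*\|_{\ip}$. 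With that correction your proof closes; as written, dropping the term leaves a gap.
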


Following \cite[Theorem 5.3]{CCGMNN_Semilinear} and last few steps of the above Theorem~\ref{thm_const_osc}, the next results follows immediately.
\begin{cor}\label{cor_const_M}
	Let $\Psi=(u,v)$ be the nonsingular solution of \eqref{vform_cts}. The consistency term $\|N_{\nc}(\Psi)\|_{\bMS^*}$ has the estimate
	\begin{equation}\label{consist_osc_M}
	\|N_{\nc}(\Psi)\|_{\bMS^*}\lesssim \min_{\bv_{\M}\in \bMS}\trinl\Psi-\bv_{\M}\trinr_{\nc}+{\rm osc}(f,\cT).
	\end{equation}		
\end{cor}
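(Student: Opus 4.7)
The plan is to mimic the structure of Theorem~\ref{thm_const_osc} with the DG enrichment replaced by a Morley-to-conforming enrichment operator $E_M:\MS\to X$ (as in \cite{CCDGJH14,HuShi_Morley_Apost,CCGMNN_Semilinear}) satisfying the analogous approximation property $\sum_{K\in\cT}|(1-E_M)\phi_{\M}|_{H^m(K)}^2\lesssim h^{4-2m}\|\phi_{\M}\|_{\nc}^2$ for $m=0,1,2$. First I would select $\Phi_{\M}\in\bMS$ with $\trinl\Phi_{\M}\trinr_{\nc}=1$ attaining the dual norm $\|N_{\nc}(\Psi)\|_{\bMS^*}=N_{\nc}(\Psi,\Phi_{\M})$. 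Since $E_M\Phi_{\M}\in\bX$ and $\Psi$ solves \eqref{vform_cts}, the broken and continuous forms coincide on the conforming test function, giving $N_{\nc}(\Psi,E_M\Phi_{\M})=N(\Psi,E_M\Phi_{\M})=0$, so $N_{\nc}(\Psi,\Phi_{\M})=N_{\nc}(\Psi,\boldsymbol{\chi})$ with $\boldsymbol{\chi}:=\Phi_{\M}-E_M\Phi_{\M}$.

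Introducing the best Morley approximation $\Psi_{\M}^*=(u_{\M}^*,v_{\M}^*)\in\bMS$, I would split
$N_{\nc}(\Psi,\boldsymbol{\chi})=A_{\nc}(\Psi-\Psi_{\M}^*,\boldsymbol{\chi})+\bigl(B_h(\Psi,\Psi,\boldsymbol{\chi})-B_h(\Psi_{\M}^*,\Psi_{\M}^*,\boldsymbol{\chi})\bigr)+A_{\nc}(\Psi_{\M}^*,\boldsymbol{\chi})+B_h(\Psi_{\M}^*,\Psi_{\M}^*,\boldsymbol{\chi})-L_h(\boldsymbol{\chi})$.
The first bracket is bounded by $\trinl\Psi-\Psi_{\M}^*\trinr_{\nc}$ through Cauchy--Schwarz together with the enrichment estimate $\trinl\boldsymbol{\chi}\trinr_{\nc}\lesssim\trinl\Phi_{\M}\trinr_{\nc}=1$; the trilinear difference is bounded the same way by boundedness of $B_h$.

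The heart of the argument is the consistency residual at $\Psi_{\M}^*$. Since $u_{\M}^*,v_{\M}^*$ are piecewise quadratic, a piecewise integration by parts of $\sik D^2 u_{\M}^*{:}D^2\chi_1\dx$ leaves only edge contributions, which split into $\sie\int_E\jump{D^2 u_{\M}^*\nu_E}\cdot\avg{\nabla\chi_1}\ds$ and $\se\int_E\avg{D^2 u_{\M}^*\nu_E}\cdot\jump{\nabla\chi_1}\ds$. The second sum vanishes: the Morley property $\int_E\jump{\partial\phi_{\M,1}/\partial\nu_E}\ds=0$, the vertex continuity of $\phi_{\M,1}$ (which forces $\int_E\jump{\partial\phi_{\M,1}/\partial\tau_E}\ds=0$ since it is the telescoping difference at the endpoints), and the $C^1$-conformity of $E_M\phi_{\M,1}$ together show that both components of $\jump{\nabla\chi_1}$ have edge mean zero, while $\avg{D^2 u_{\M}^*\nu_E}$ is a constant vector on $E$ because $D^2 u_{\M}^*$ is piecewise constant. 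For the first sum, I would decompose $\avg{\nabla\chi_1}$ in the $\nu_E,\tau_E$ basis and invoke the edge efficiency Lemma~\ref{lem:jump_eff} for the $\nu_E$-component and the inverse-estimate reduction used inside the proof of Theorem~\ref{thm_const_osc} for the $\tau_E$-component (identifying $\jump{D^2 u_{\M}^*\nu_E}\cdot\tau_E$ with the surface derivative of $\jump{\nabla u_{\M}^*\cdot\nu_E}$), combined with trace and enrichment bounds on $\avg{\nabla\chi_1}$; this produces $A_{\nc}(\Psi_{\M}^*,\boldsymbol{\chi})\lesssim\trinl\Psi-\Psi_{\M}^*\trinr_{\nc}+{\rm osc}(f,\cT)$.

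Finally, $B_h(\Psi_{\M}^*,\Psi_{\M}^*,\boldsymbol{\chi})-L_h(\boldsymbol{\chi})$ is a sum of volume integrals of $(f+[u_{\M}^*,v_{\M}^*])\chi_1$ and $\frac{1}{2}[u_{\M}^*,u_{\M}^*]\chi_2$, which Cauchy--Schwarz combined with the $L^2$-enrichment bound $\|\chi_i\|_{L^2(K)}\lesssim h_K^2\|\phi_{\M,i}\|_{\nc}$ and the volume efficiency Lemma~\ref{lem:vol_eff} controls by $\trinl\Psi-\Psi_{\M}^*\trinr_{\nc}+{\rm osc}(f,\cT)$. Assembling all estimates and using the arbitrariness of $\Psi_{\M}^*\in\bMS$ delivers the claim. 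The main obstacle I anticipate is the clean verification that the second edge sum cancels: it rests simultaneously on both Morley conditions and on the $C^1$-nature of the enrichment, and the careful treatment of boundary edges (where the Morley boundary conditions replace the interior jump conditions) is the delicate technical point.
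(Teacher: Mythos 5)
Your proposal is correct and follows essentially the route the paper intends: the paper's own ``proof'' is a one-line reference to the Morley consistency analysis of \cite[Theorem 5.3]{CCGMNN_Semilinear} combined with the final steps of Theorem~\ref{thm_const_osc}, and your reconstruction (conforming enrichment of the Morley test function, cancellation of the average--jump edge terms via the two Morley weak-continuity conditions together with the piecewise-constant Hessian, and the volume/jump efficiency Lemmas~\ref{lem:vol_eff} and~\ref{lem:jump_eff} for the remaining terms) is precisely that argument spelled out. The only implicit point worth recording is that the enrichment and edge bounds you quote in the $\|\cdot\|_{\dg}$- or $\|\cdot\|_h$-scaled quantities reduce to $\|\cdot\|_{\nc}$ on $X+\MS$ by Lemma~\ref{norm_equivalence}, which you tacitly and correctly use.
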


\begin{rem}
	We observe that \cite[Theorem 5.3]{CCGMNN_Semilinear} proves an a priori result
	\begin{equation*}
	\trinl\Psi-\Psi_{\M}\trinr_{\nc}\lesssim \min_{\bv_{\M}\in \bMS}\trinl\Psi-\bv_{\M}\trinr_{\nc}+{\rm osc}(f+[u,v],\cT)+{\rm osc}([u,u],\cT)\lesssim h^{\alpha},
	\end{equation*}
	where $\alpha\in (1/2,1]$ is the index of elliptic regularity. This involves oscillation term in unknowns $u,v$. Whereas the combination of Theorem~\ref{thm_abs_est} and \ref{cor_const_M} avoid the involvement of unknowns $u,v$ in the oscillation.	 
\end{rem}

{\it Proof of Theorem~\ref{error_equiv}}:  Previous Theorem~\ref{thm_abs_est}-\ref{thm_const_osc} and Corollary~\ref{cor_const_IP}-\ref{consist_osc_M} imply the equivalent best approximation result in the unified norm $\trinl\bullet\trinr_{h}$ as
\begin{align*}
\trinl\Psi-\Psi_{\M}\trinr_{\nc}&\approx \min_{\bv_{\M}\in \bMS}\trinl\Psi-\bv_{\M}\trinr_{h},\\
\trinl\Psi-\Psi_{\ip}\trinr_{\ip}&\approx \min_{\bv_{\ip}\in \bIPS}\trinl\Psi-\bv_{\ip}\trinr_{h},\\
\trinl\Psi-\Psi_{\dg}\trinr_{\dg}&\approx \min_{\bv_{\M}\in \bDGS}\trinl\Psi-\bv_{\dg}\trinr_{h},
\end{align*}
up to the oscillation ${\rm osc}(f,\cT)$. Then Lemma~\ref{equi_best_apprx} establishes 
	\begin{equation*}
\trinl\Psi-\Psi_{\M}\trinr_{\nc}\approx \trinl\Psi-\Psi_{\ip}\trinr_{\ip}\approx\trinl\Psi-\Psi_{\dg}\trinr_{\dg}\approx \min_{\bv_{\M}\in \bMS}\trinl\Psi-\bv_{\M}\trinr_{\nc}.
\end{equation*}
The interpolation Lemma~\ref{Morley_Interpolation} (a) shows $ \min_{\bv_{\M}\in \bMS}\trinl\Psi-\bv_{\M}\trinr_{\nc}=\trinl (I-\Pi_0)D^2\Psi\trinr_{L^2(\Omega)}$, and this completes the proof of main result.\qed

\section{Numerical Experiments}
Three examples are presented below for the numerical approximations of Morley FEM, $C^0$IP and DGFEM to illustrate that they are equivalent. In the following experiments stabilization parameter for $C^0$IP and DGFEM are set as $\sigma_{\ip}=\sigma_{\dg}=20$.
\subsection{Analytic solution}\label{sec:analytic}
We consider the exact solution $u(x,y)=\sin^2(\pi x)\sin^2(\pi y)$ and $v(x,y)=x^2y^2(1-x)^2(1-y)^2$ for \eqref{vke} on the unit square $\Omega$ with regularity index $\alpha=1$ and the corresponding data $f:=\Delta^2 u -[u,v]$ and $g:=\Delta^2 v+\half [u,u]$. The  numerical experiments are performed on a sequence of uniform meshes starting with an initial mesh $\cT_0$ (see Figure~) on a unit square domain. In the uniform refinement  process, each triangle is divided into four similar triangles, see Figure~\ref{fig:square_meshes}. The convergence histories for nonconforming, DG and $C^0$IP methods are shown in Figure~\ref{fig:conv_analytic}. It illustrates that the convergence rate for all the $P_2$ elements are close to $0.5$ with respect to number of degrees of freedom (\texttt{ndof}), i.e., $\trinl\Psi-\Psi_{\M}\trinr_{h}\approx \trinl\Psi-\Psi_{\dg}\trinr_{h}\approx \trinl\Psi-\Psi_{\ip}\trinr_{h}\approx \texttt{ndof}^{-1/2}$, when meshes are sufficiently refined.

\begin{figure}
	\begin{center}
		\subfloat[]{\includegraphics[width=0.45\textwidth]{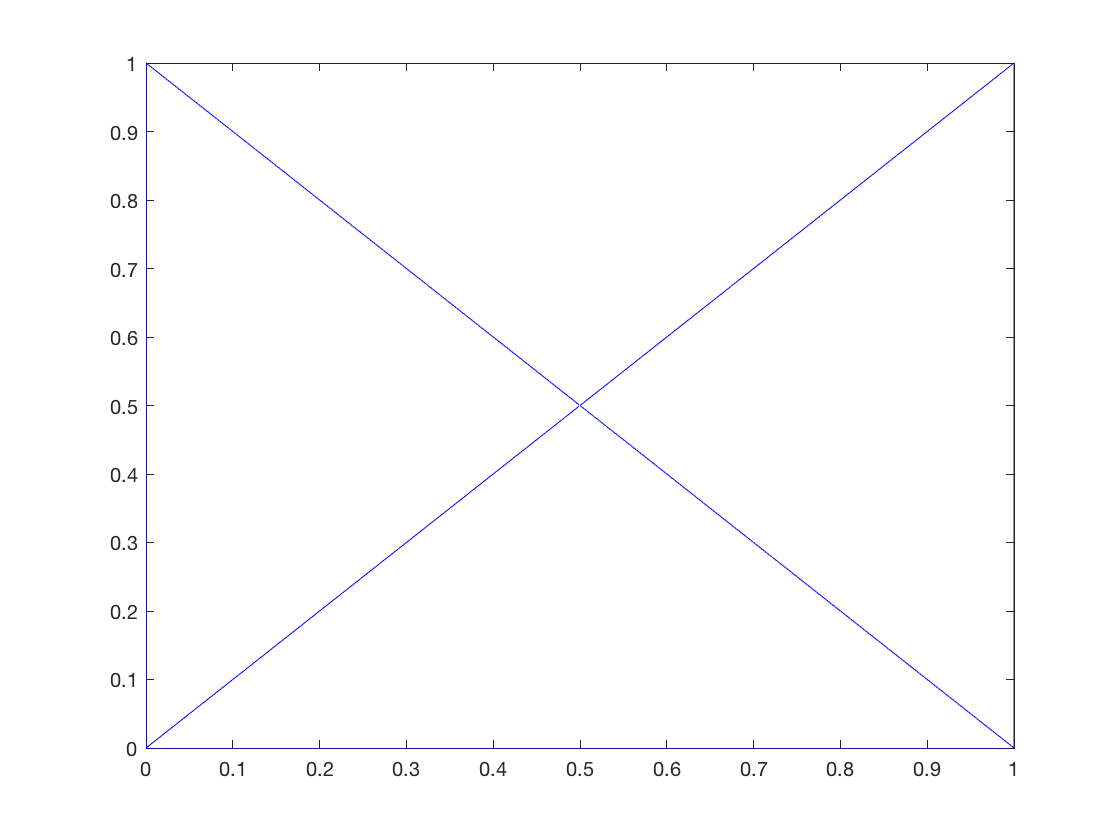}}
		\subfloat[]{\includegraphics[width=0.45\textwidth]{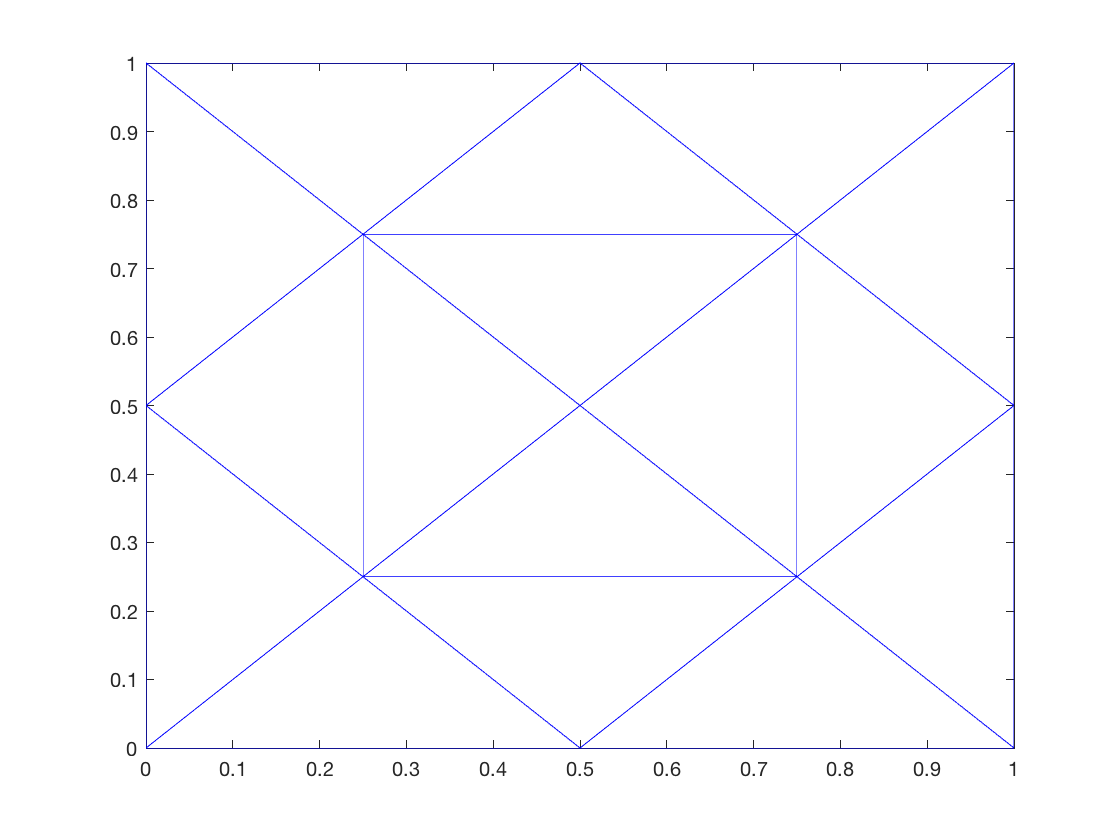}}
		%\subfloat[]{\includegraphics[width=0.35\textwidth]{t1_square.png}}
		%\subfloat[]{\includegraphics[height =3.5in,width = 3.25in]{uhf1}}
	\end{center}
	\caption{ (a) Initial mesh $\cT_0$ and (b) first uniform refinement $\cT_1$.}
	\label{fig:square_meshes}
\end{figure}
\begin{figure}
	\begin{center}
		\includegraphics[width=0.7\textwidth]{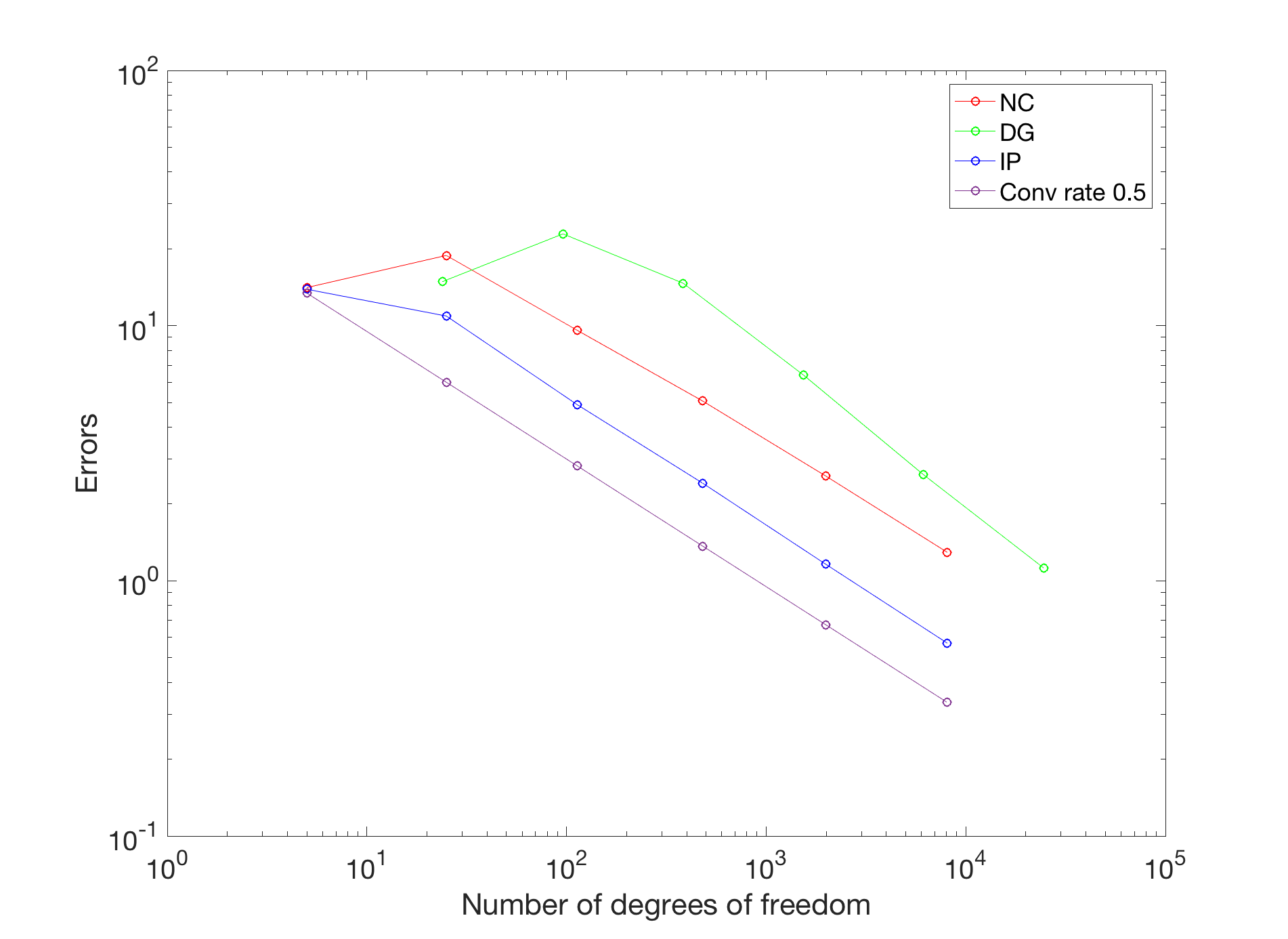}
	\end{center}
\caption{Convergence histories for Nonconforming, DG and $C^0$IP method on uniform meshes for Example~\ref{sec:analytic}. }
\label{fig:conv_analytic}
\end{figure}

\subsection{Singular solution on L-shaped domain with uniform refinement}\label{sec:sing_unif}
Consider the L-shaped domain  $\Omega=(-1,1)^2 \setminus\big{(}[0,1)\times(-1,0]\big{)}$. Set the
singular functions \cite{Grisvard}
$\displaystyle
u(r,\theta)=v(r,\theta):=(1-r^2 \cos^2\theta)^2 (1-r^2 \sin^2\theta)^2 r^{1+\alpha}g_{\alpha,\omega}(\theta)$ 	with
$	g_{\alpha,\omega}(\theta):=$
\begin{align*}
&\left(\frac{1}{\alpha-1}\sin\big{(}(\alpha-1)\omega\big{)}-\frac{1}{\alpha+1}\sin\big{(}(\alpha+1)\omega\big{)}\right)\times\Big{(}\cos\big{(}(\alpha-1)\theta\big{)}-\cos\big{(}(\alpha+1)\theta\big{)}\Big{)}\\
&-\left(\frac{1}{\alpha-1}\sin\big{(}(\alpha-1)\theta\big{)}-\frac{1}{\alpha+1}\sin\big{(}(\alpha+1)\theta\big{)}\right)\times\Big{(}\cos\big{(}(\alpha-1)\omega\big{)}-\cos\big{(}(\alpha+1)\omega\big{)}\Big{)},
\end{align*}
where the angle $\omega:=\frac{3\pi}{2}$ and the parameter $\alpha= 0.5444837367$ is a non-characteristic root of $\sin^2(\alpha\omega) = \alpha^2\sin^2(\omega)$. The loads $f$ and $g$ are chosen according to \eqref{vke}.  The  numerical experiments are performed on a sequence of uniform meshes starting with an initial mesh $\cT_0$ (see Figure~) on a L-shaped domain. The convergence histories for nonconforming, DG and $C^0$IP methods are shown in Figure~\ref{fig:conv_sing_unif}. It illustrates that the errors  for all the $P_2$ elements decay in same sub-optimal rate, and satisfy the equivalence of convergence $\trinl\Psi-\Psi_{\M}\trinr_{h}\approx \trinl\Psi-\Psi_{\dg}\trinr_{h}\approx \trinl\Psi-\Psi_{\ip}\trinr_{h}$, when meshes are sufficiently refined.

\begin{figure}
	\begin{center}
		\subfloat[]{
	      \begin{tikzpicture}
         	\draw[scale=2.5]  (0,0)--(1,0)--(1,1)--(2,1)--(2,2)--(0,2)--(0,0);
         	\draw[scale=2.5]  (0,0)--(1,1)--(2,2);
         	\draw[scale=2.5]  (0,1)--(1,2);
         	\draw[scale=2.5]  (0,1)--(1,1)--(1,2);
        	\end{tikzpicture}	
        	}
		\subfloat[]{\includegraphics[width=0.5\textwidth]{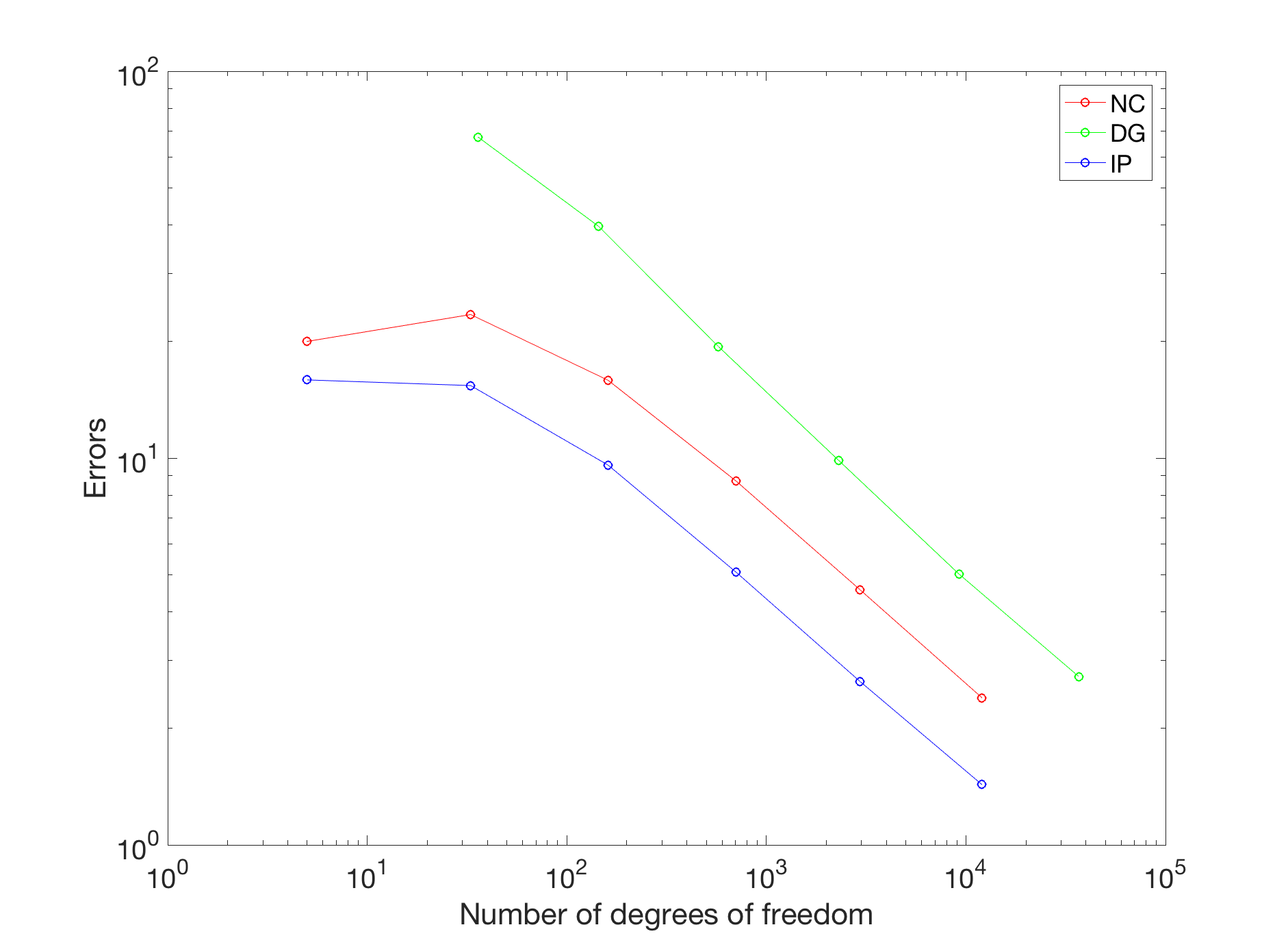}}	
	\end{center}
	\caption{(a) Initial L-shaped mesh and (b) Convergence histories for Nonconforming, DG and $C^0$IP method on uniform meshes for Example~\ref{sec:sing_unif}. }
	\label{fig:conv_sing_unif}
\end{figure}

\subsection{Singular solution on L-shaped domain with adaptive mesh refinement}\label{sec:sing_adap}
In this  test, we consider load functions $f$ and $g$ from the above Example~\ref{sec:sing_unif}, and perform adaptive refinement procedure from the initial mesh $\cT_0$. We follow the adaptive Algorithm~\ref{algo_adaptive} for each estimator of Morley FEM, DGFEM and $C^0$IP FEM  and this generates sequence of adaptive meshes with bulk parameter $\theta=0.5$. The convergence histories for nonconforming, DG and $C^0$IP methods with estimator from each of the method are shown in Figure~\ref{fig:conv_sing_adap}. This shows that adaptive mesh refinements lead to optimal convergence rate $0.5$ for $P_2$ FEMs. Moreover, for all the different estimators, Morley FEM, $C^0$IP and DGFEM show the same convergence rate, and this proves the equivalence of errors.

%Now, we perform a final test for a load function $f=1$ of \eqref{vke} with unknown solution $u$ and $v$. 

\begin{figure}
	\begin{center}
		 \subfloat[]{\includegraphics[width=0.35\textwidth]{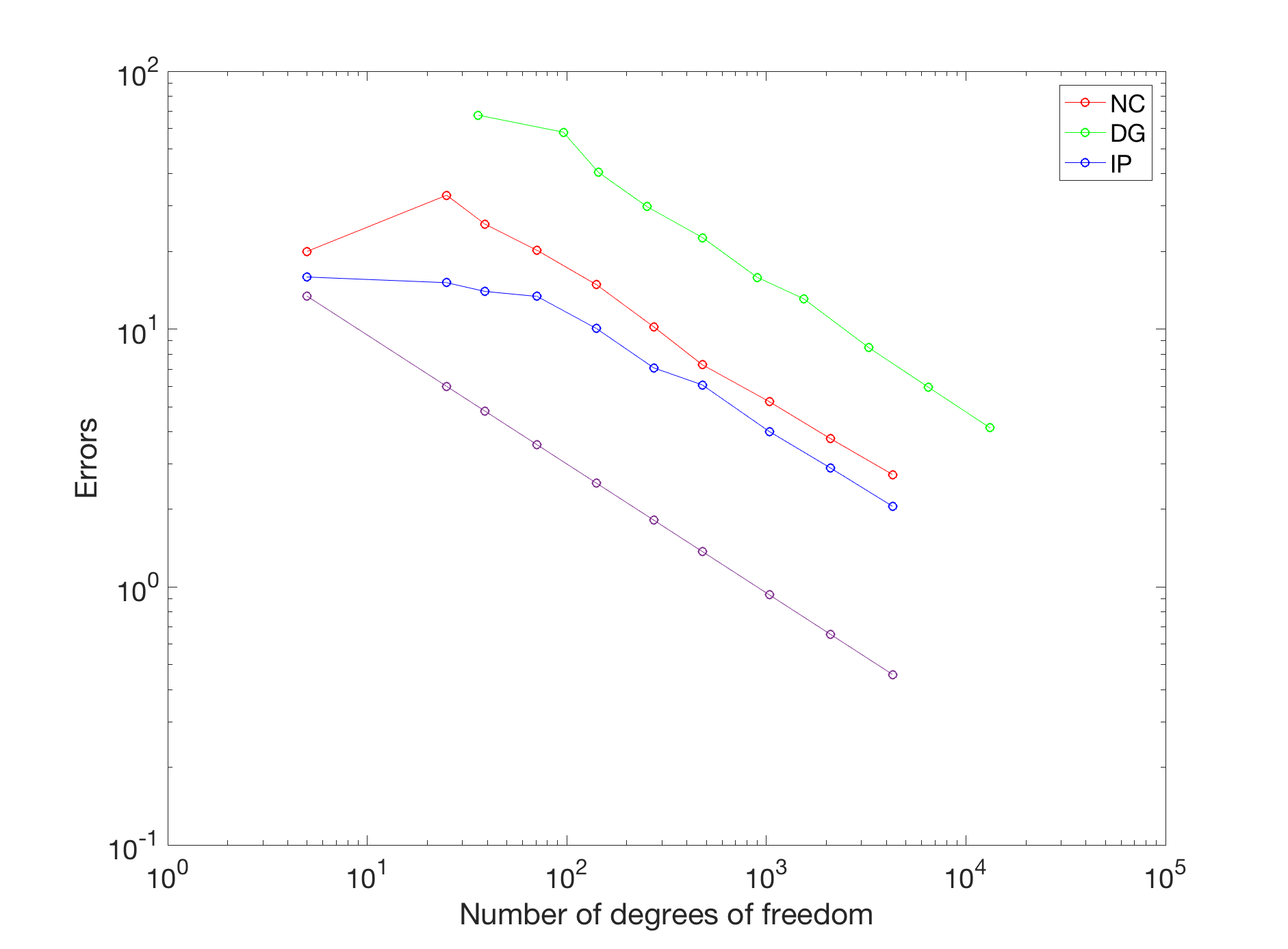}}
		 \subfloat[]{\includegraphics[width=0.35\textwidth]{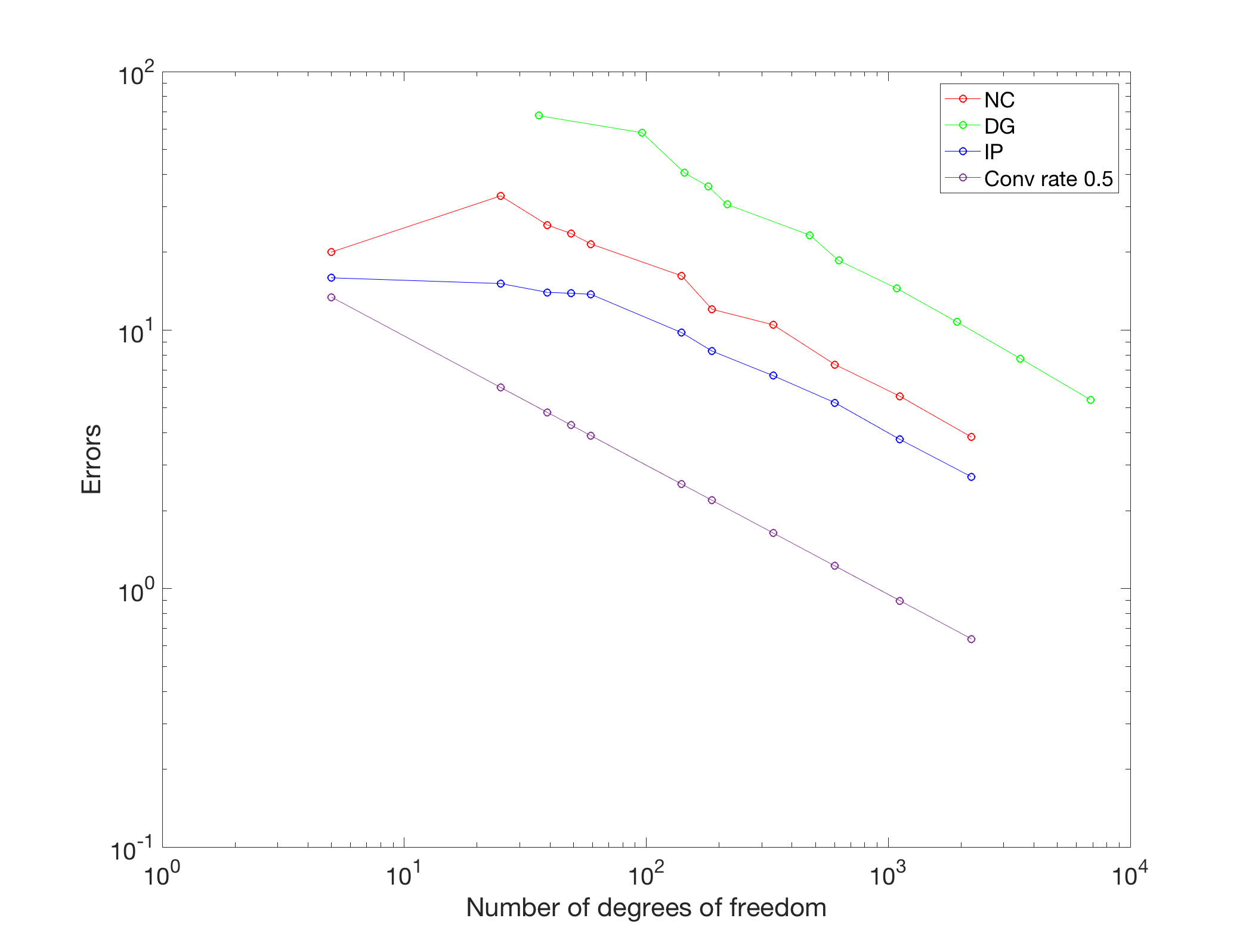}}
		 \subfloat[]{\includegraphics[width=0.35\textwidth]{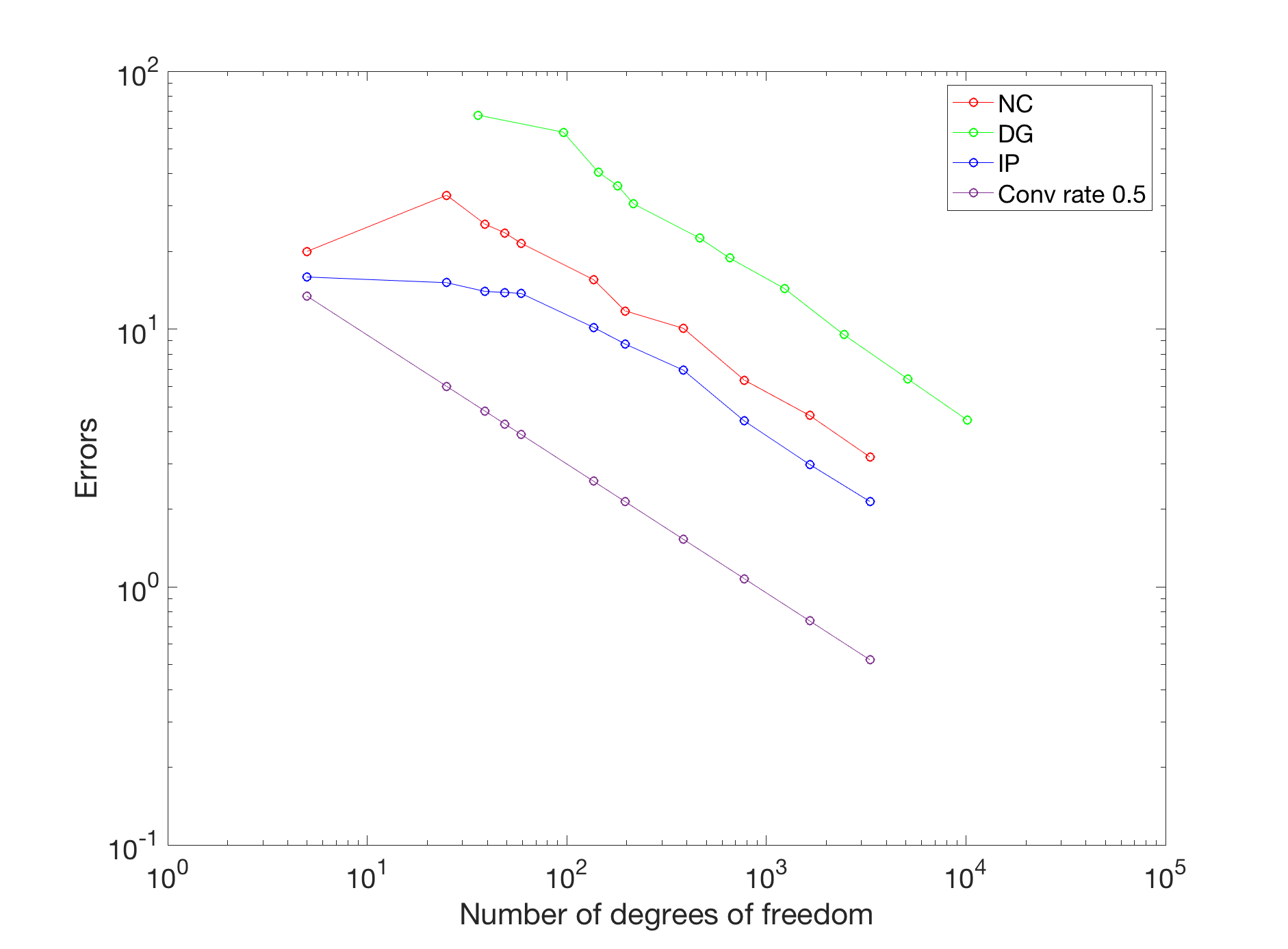}}
		  %\subfloat[]{\includegraphics[height =3.5in,width = 3.25in]{uhf1}}
	\end{center}
	\caption{Convergence histories for Nonconforming, DG and $C^0$IP method on adaptive meshes for Example~\ref{sec:sing_adap} with (a) Morley FEM estimator, (b) DGFEM estimator and (c) $C^0$IP FEM estimator of Algorithm~\ref{algo_adaptive}.}
	\label{fig:conv_sing_adap}
\end{figure}

\begin{algorithm}\caption{Adaptive algorithm}\label{algo_adaptive}
	{\bf Input}: Initial mesh $\cT_0$, $J \geq 1$, bulk parameter $\theta \in (0,1]$.\\
    {\bf for} $j=0,1,2,\ldots$ {\bf do}\\
    Solve. Compute the discrete solution of Morley FEM $\Psi_j:=\Psi_{\M}$ or $C^0$IP $\Psi_j:=\Psi_{\ip}$ or DGFEM $\Psi_j:=\Psi_{\dg}$ on the mesh $\cT_j$.\\
    Estimate. For $K\in\cT_j$, compute the local contributions $\eta_{j}^2(K):=\eta_{\M}^2(K)$ of Morley FEM \cite{CCGMNN_Semilinear}, $\eta_{j}^2(K):=\eta_{\ip}^2(K)$ of $C^0$IP \cite{CCGMNN_DG} and $\eta_{j}^2(K):=\eta_{\dg}^2(K)$ of DGFEM \cite{CCGMNN_DG} defined by
    \begin{align*}
    \eta_{\M}^2(K):=& h_K^4\left\|[u_M,v_M]+f\right\|_{L^2(K)}^2+
    h_K^4 \left\|[u_M,u_M]\right\|_{L^2(K)}^2\\
   &+h_E\left\|\left[D^2 u_M\right]_E\tau_E\right\|_{L^2(E)}^2+h_E\left\|\left[D^2   v_M\right]_E\tau_E\right\|_{L^2(E)}^2,\\
    \eta_{\ip}^2(K):=&h_K^4\Big{(}\|f+[u_{\ip},v_{\ip}]\|_{L^2(K)}^2+\|[u_{\ip},u_{\ip}]\|_{L^2(K)}^2\Big{)}\\
    &+h_E\left(\|[D^2 u_{\ip}\,\nu_E]_E\cdot\nu_E\|_{L^2(E)}^2+\|[ D^2v_{\ip}\nu_E]_E\cdot\nu_E\|_{L^2(E)}^2\right)\\
    &\quad+h_E^{-1}\left(\|[\nabla u_{\ip}]_E\|_{L^2(E)}^2+\|[\nabla v_{\ip}]_E\|_{L^2(E)}^2\right),\\
    \eta_{\dg}^2(K):=&h_K^4\Big{(}\|f+[u_{\dg},v_{\dg}]\|_{L^2(K)}^2+\|[u_{\dg},u_{\dg}]\|_{L^2(K)}^2\Big{)}\\
    &+h_E^{-3}\left(\|[u_{\dg}]_E\|_{L^2(E)}^2+\|[v_{\dg}]_E\|_{L^2(E)}^2\right)+h_E^{-1}\left(\|[\nabla u_{\dg}]_E\|_{L^2(E)}^2+\|[\nabla v_{\dg}]_E\|_{L^2(E)}^2\right).
    \end{align*}
    Mark. The D\"{o}rfler marking chooses a minimal subset $\mathcal{M}_{j}\subset \cT_{j}$ such that
    \begin{equation*}
    \theta\sum_{K\in\mathcal{T}_{j}}\eta^2_{j}(K)\leq \sum_{K\in\mathcal{M}_{j}}\eta^2_{j}(K). 
    \end{equation*} 
    Refine. Compute the closure of $\mathcal{M}_{j}$ and generate a new mesh $\cT_{j+1}$ using newest vertex bisection \cite{Stevenson08}.\\
    {\bf Output}: Sequence of meshes $(\cT_j)_j$ and discrete solution $(\Psi_j)_j$.
\end{algorithm}	

%\section{Conclusion}

\section*{Acknowledgements}
{ The author would like to thank Professor Neela Nataraj for fruitful discussions. The author acknowledges the support of National Board for Higher Mathematics (NBHM) research grant no. 0204/58/2018/R\&D-II/14746. 
}
\bibliography{refs}
%\addcontentsline{toc}{chapter}{Bibliography}
\bibliographystyle{amsplain}

\bigskip

\end{document}